\def\r{\mathbb R}
\def\grad{\textrm{grad}}
\newtheorem{theorem}{Theorem}[section]
\theoremstyle{definition}
\newtheorem{definition}[theorem]{Definition}
\newtheorem{remark}[theorem]{Remark}
\newtheorem{lemma}[theorem]{Lemma}
\begin{document}

\markboth{}
{Separable hypersurfaces with constant sectional
curvature}

%
 %

\title[Separable hypersurfaces with constant sectional
curvature]{Classification of separable hypersurfaces with constant sectional
curvature}

\author{Muhittin Evren Aydin}

\address{Department of Mathematics, Faculty of Science, Firat University, Elazig,  23200 Turkey}
\email{meaydin@firat.edu.tr }

\author{Rafael López}

\address{Departamento de Geometr\'{\i}a y Topolog\'{\i}a,  Universidad de Granada, 18071 Granada, Spain}
\email{ rcamino@ugr.es}

\author{Gabriel-Eduard Vîlcu}

\address{Department of Mathematics and Informatics, Bucharest Polytechnic University, Faculty of Applied Sciences, Splaiul Independenţei 313, 060042 Bucharest, Romania}
\address{“Gheorghe Mihoc-Caius Iacob” Institute of Mathematical Statistics and Applied Mathematics of the Romanian Academy, 050711 Bucharest, Romania}
\address{Research Center in Geometry, Topology and Algebra, Faculty of Mathematics and Computer Science, University of Bucharest, Academiei 14, Bucharest 010014, Romania}
\email{gabriel.vilcu@upb.ro}

\keywords{separable hypersurface, translation hypersurface, homothetical hypersurface, sectional curvature, Gaussian curvature}
\subjclass{53A07}
\begin{abstract}
In this paper, we give a full classification of the separable
hypersurfaces of constant sectional curvature in the Euclidean $n$-space
$\r^n$. In  dimension $n=3$, this classification was   solved by Hasanis and L\'opez [Manuscripta Math. 166, 403-417 (2021)].
When $n>3$,  we
prove that the separable hypersurfaces of null sectional curvature are
three particular families of such hypersurfaces. Finally,  we prove that
hyperspheres are the only separable hypersurfaces with nonzero constant
sectional curvature.
\end{abstract}
\maketitle

\section{Introduction}

\quad The study of submanifolds of constant sectional curvature in different ambient spaces is one of the important topics in submanifold theory, originating in the investigation of surfaces with constant Gaussian curvature in the Euclidean 3-space (for more details see \cite{GT2018}). When the ambient is a space form, explicit examples of such submanifolds can be constructed using a very useful tool developed in \cite{DT2012} under the name of \emph{Ribaucour transformation}. Moreover, in this setting of an ambient space form, the geometry of hypersurfaces having constant sectional curvature is well understood (see \cite{Li2023}). On the other hand, if the ambient is not a space form, obtaining classification results for submanifolds of constant sectional curvature with arbitrary codimensions is a very challenging question. However, even in this context, some classification results can be obtained using a technique known as \emph{Tsinghua principle}, originally
discovered by Li, Vrancken and Wang  at Tsinghua University in 2013. Using this interesting principle, it has been demonstrated in \cite{Yao2022} the nonexistence of locally conformally flat real hypersurfaces in the complex quadric with dimension $\geq 3$.
We would like to emphasize that many interesting  results concerning the sectional curvature of hypersurfaces in various ambient spaces were obtained in the last decades (see, e.g. \cite{Anti2021,Cheng2022,CI90,Kim94,Man2011,Sack60}).
Notice that the latest result was established in \cite{GK2023}, where the authors obtained the complete classification of hypersurfaces of constant isotropic curvature in a real space form, under some topological assumptions: completeness, connectedness, orientability. Recall that any manifold having constant sectional curvature has  the isotropic curvature also constant, while the converse statement is true in dimension $\geq 5$.

\quad In this paper, we are going to investigate the separable hypersurfaces with constant
sectional curvature in the Euclidean $n$-dimensional space $\r^n$. Recall that separable hypersurfaces are a generalization of some well-known families of hypersurfaces, such as translation hypersurfaces and homothetical hypersurfaces.
In the Euclidean 3-space, a translation surface can be constructed by translating a planar curve over another planar curve. This notion was generalized to a translation hypersurface as the sum of $n$ planar curves. More precisely, a hypersurface $M^{n-1}$ of $\r^n$ is said to be a \emph{translation hypersurface} if $M^{n-1}$ is the graph of a function $F$ defined by
\begin{equation}\label{sum}
  F(x_1,...,x_{n-1})=\sum_{i=1}^{n-1}f_i(x_i),
\end{equation}
where $x_1,...,x_{n-1}$ are the Cartesian coordinates of $\r^{n-1}$ and $f_i$ is a smooth real valued function of single variable, $i=1,...,n-1$ (see \cite{Dil91}).
Note that various results on the geometry of translation surfaces and hypersurfaces were stated in \cite{Chen2003,HL2021,Lima2014,Liu99,s}. For more generalizations of translation hypersurfaces and recent progress, see \cite{Lima2019,MM1,MM2,Ruiz21,Y2019}.

\quad On the other hand, a hypersurface $M^{n-1}$ of $\r^n$ is said to be \emph{homothetical} or \emph{factorable} if  $M^{n-1}$ is the graph of a function $F$ defined by
\begin{equation}\label{prod}
F(x_1,...,x_{n-1})=\prod_{i=1}^{n-1}f_i(x_i),
\end{equation}
where again $x_1,...,x_{n-1}$ denote  the canonical coordinates of  $\r^{n-1}$, while
$f_1,…,f_{n-1}$ are smooth real valued functions of single variable \cite{Jiu2007}. Several curvature properties of homothetical surfaces and hypersurfaces were established in \cite{Jiu2007,LM2013,VW95}.

\quad 
Separable surfaces, which include translation and factorable surfaces as particular subfamilies, have been investigated in geometry since the 19th century (see \cite{Weing}). In particular, separable surfaces with zero mean curvature are known from the works of Scherk, Weingarten, Schwarz and Fr\'echet among others (see \cite[Chapter II, Section 5]{n}). It is clear that separable surfaces
form a class of surfaces with its own interest, which can be extended to separable hypersurfaces as follows.

\begin{definition} \cite{cww}
A hypersurface $M^{n-1}$ of the Euclidean $n$-space $\r^n$ ($n \geq 3$) is
said to be {\it separable}
if it can be expressed as
$$
M^{n-1}=\{ (x_1,...,x_n) \in \r^n:\sum_{i=1}^{n}f_i(x_i)=0 \},
$$
where $f_1,...,f_n$ are smooth real valued functions of single variable defined in certain intervals $I_1,...,I_n$ of $\r$, with $\sum_{i=1}^{n}f'_i(x_i)\neq 0$, for every $x_i\in I_i$, $i=1,...,n$.
\end{definition}

\quad It is clear that any translation hypersurface is a particular type of separable hypersurface. Moreover,
taking logarithms in the equation of a homothetical hypersurface, namely
$
\displaystyle x_n=\prod_{i=1}^{n-1}f_i(x_i),
$
we obtain immediately that this equation
reduces to
$
\displaystyle \log x_n=\sum_{i=1}^{n-1}\log f_i(x_i),
$
and this shows us that the hypersurface is separable. It is quite interesting that the family of separable hypersurfaces includes not only translation and homothetical hypersurfaces as particular subfamilies, but also some more general sets of graph hypersurfaces, namely quasi-sum and  quasi-product hypersurfaces, which are of particular interest in production theory \cite{Du2022,FW17,Luo2023}. We would like to point out
that the classification of the separable surfaces with constant Gaussian
curvature is done in \cite{hl} (see \cite{Gronwal} for a particular case), while the classification of the separable hypersurfaces with zero
Gauss–Kronecker curvature is done in \cite{cww}. On the other hand, the classification of the separable surfaces with non-zero constant
mean curvature is done in \cite{hl2}, while in Lorentz-Minkowski space, separable minimal surfaces were classified
in \cite{Kaya2022}. Moreover, other classification results for the quasi-sum and quasi-product production models via the main curvature invariants of the corresponding hypersurfaces were obtained in \cite{acdv,c,Du2022,FW17,Luo2023}.

\quad Motivated by the previously mentioned articles, we investigate the problem of finding the separable hypersurfaces of constant sectional curvature in the Euclidean $n$-space. In the case $n=3$, since the problem is equivalent to find such surfaces of constant Gaussian curvature and was completely solved in \cite{hl}, we are interested in the dimension $n>3$. In Sect. 3 we first classify the separable hypersurfaces of null sectional curvature, obtaining that there are three types of such hypersurfaces, namely: the hyperplanes, a particular type of Cobb-Douglas hypersurface, and the product 
$\Gamma \times \r^{n-2}$, where $\Gamma$ is a curve with non-null curvature included in a coordinate $2$-plane of $\r^n$.
The case of non-zero constant sectional curvature is studied in Sect. 4. We prove that hyperspheres are the only separable hypersurfaces of nonzero constant sectional curvature in the Euclidean $n$-space, provided that $n>3$.

\section{Preliminaries}

\quad In this section we summarize the differential-geometrical properties of the hypersurfaces in the Euclidean ambient space, cf. \cite{c1,d}.

\quad Let $(\r^n , \langle \cdot , \cdot \rangle)$ be the Euclidean $n-$space and $\tilde{\nabla}$ the Levi-Civita connection on $\r^n$. We denote by $\mathbb{S}^{n-1}=\{ \mathbf{x} \in \r^n : \langle \mathbf{x} ,\mathbf{x} \rangle =1 \}$ the unit hypersphere of $\r^n$. Let $M^{n-1}$ be an orientable hypersurface of $\r^n$ and denote by $\nu$ the {\it Gauss map} of $M^{n-1}$, i.e. $\nu : M^{n-1} \to \mathbb{S}^{n-1}$ such that $\nu(p)$ is a unit normal vector field $N (p)$ on $M^{n-1}$ at $p \in M^{n-1}$.

\quad Let $T_pM^{n-1}$ be the tangent space of $M^{n-1}$ at $p \in M^{n-1}$. Then, the differential $d\nu$ is called the {\it shape operator} of $M^{n-1}$ where $d\nu_p$ is an endomorphism on $T_pM^{n-1}$, i.e. $d\nu_p : T_pM^{n-1} \to T_pM^{n-1}$ is a linear map. An important intrinsic invariant called {\it Gauss-Kronecker curvature} at $p \in M^{n-1}$ is defined as $\det(d\nu_p)$.

\quad We define the {\it second fundamental form} $\textrm{II}$ of $M^{n-1}$ as a symmetric and bilinear map given by
$$
\textrm{II}(X_p,Y_p)=\langle -d\nu(X_p),Y_p \rangle, \quad X_p,Y_p \in T_pM^{n-1}.
$$

Setting $h(X_p,Y_p)=\textrm{II}(X_p,Y_p)N (p)$, the {\it formula of Gauss} is now
$$
\tilde{\nabla}_XY=\nabla_XY+h(X,Y),
$$
where $\nabla$ is the induced Levi-Civita connection on $M^{n-1}$. Let $\Pi$ be a plane section of $T_pM^{n-1}$ spanned by an orthonormal basis $\{X_p,Y_p \} $. Then, we define the {\it sectional curvature} of $\Pi$ as \[K(X_p,X_p)=\langle R(X,Y)Y,X \rangle(p),\] where $R$ is the Riemannian curvature tensor of $M^{n-1}$ given by
$$
 R(X,Y)Z = \nabla_X\nabla_YZ-\nabla_Y\nabla_XZ-\nabla_{[X,Y]} Z,
$$
for the smooth vector fields $X,Y,Z$ tangent to $M^{n-1}$.

\quad We call $M^{n-1}$ a {\it flat hypersurface} if $R$ is identically $0$. It is clear from the definition of sectional curvature $K$ that $R$ determines $K$. On the other hand, it is known that $K$ determines $R$ (see \cite[p. 78]{o}).
Actually, if $K(\Pi)=0$ for every plane section $\Pi$ of $T_pM^{n-1}$, then it follows that  $R(X_p,Y_p)Z_p=0$ for every $X_p,X_p,Z_p \in T_pM^{n-1}$ (see \cite[Proposition 41]{o}).

\quad By the {\it equation of Gauss}, we have
$$
K(X_p,X_p)=\textrm{II}(X_p,X_p)\textrm{II}(Y_p,Y_p)-\textrm{II}(X_p,Y_p)^2.
$$

\quad Assume now that $M^{n-1}$ is a hypersurface given in implicit form. Explicitly, if $F(x_1,...,x_n)$ is a smooth real valued function on $\r^n$ and if $\grad F$  denotes the gradient of $F$ in $\r^n$ then we have
$$
M^{n-1}=\{ (x_1,...,x_n) \in \r^n: F(x_1,...,x_n)=0,\grad F \neq 0 \} .
$$%
The unit normal vector field is
$$
N =\frac{\grad F}{\| \grad F \| },
$$
where $\| \grad F \|$ is the Euclidean norm of $\grad F$. In addition, the sectional curvature is
$$
K(X,Y)=\frac{1}{\| \grad F \|^2}\left(H^F(X,X) H^F(Y,Y)-(H^F(X,Y))^2 \right ),
$$
where $H^F$ is the {\it Hessian} of $F$ in $M^{n-1}$ defined by
$$
H^F(X,Y) = \langle \nabla_X \nabla_F, Y \rangle .
$$

\quad  At the end of this section, we recall the useful result from \cite[Lemma 1]{cww} (see also \cite[Lemma 1]{hl}).
\begin{lemma}\cite{cww,hl} \label{pre-lemma}
Let $Q(u_1,...,u_n)$ be a smooth function in a domain $\Omega \subset \r^n$ and $\Pi$ a hyperplane of the form $u_1+...+u_n=0$. If $Q=0$ on the intersection $\Omega \cap  \Pi$, then
$$
\frac{\partial Q}{\partial u_1}=...=\frac{\partial Q}{\partial u_n}.
$$
\end{lemma}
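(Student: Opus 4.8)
The plan is to reduce the statement to a one-line application of the chain rule by using a convenient linear parametrization of the hyperplane $\Pi=\{u_1+\dots+u_n=0\}$. Fix a point $p\in\Omega\cap\Pi$. Since $\Omega$ is open, the affine map
$$
\varphi(u_1,\dots,u_{n-1})=\big(u_1,\dots,u_{n-1},\,-(u_1+\dots+u_{n-1})\big)
$$
sends a neighbourhood of $(p_1,\dots,p_{n-1})$ into $\Omega\cap\Pi$, and its image lies in $\Pi$ by construction. Consequently $Q\circ\varphi$ is identically zero on that neighbourhood.

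Next I would differentiate $Q\circ\varphi$ with respect to each variable $u_i$, $i=1,\dots,n-1$. Only the $i$-th coordinate and the last coordinate of $\varphi$ depend on $u_i$, and the last one has $u_i$-derivative equal to $-1$, so the chain rule gives
$$
0=\frac{\partial (Q\circ\varphi)}{\partial u_i}=\Big(\frac{\partial Q}{\partial u_i}\Big)\circ\varphi-\Big(\frac{\partial Q}{\partial u_n}\Big)\circ\varphi .
$$
Evaluating at $(p_1,\dots,p_{n-1})$ yields $\dfrac{\partial Q}{\partial u_i}(p)=\dfrac{\partial Q}{\partial u_n}(p)$ for every $i=1,\dots,n-1$, which is exactly the asserted chain of equalities at the point $p\in\Omega\cap\Pi$. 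Since $p$ was arbitrary, this holds throughout $\Omega\cap\Pi$.

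Equivalently, and this is perhaps the cleanest way to phrase it, the tangent space of $\Pi$ at each of its points is the linear hyperplane $\{v\in\r^n:\ v_1+\dots+v_n=0\}$, which is spanned by the vectors $e_i-e_n$ ($1\le i\le n-1$), where $e_1,\dots,e_n$ is the standard basis of $\r^n$; because $Q$ vanishes on the relatively open set $\Omega\cap\Pi$, its derivative in every tangent direction to $\Pi$ vanishes there, i.e. $\langle \grad Q,\,e_i-e_n\rangle=0$, which is again $\partial Q/\partial u_i=\partial Q/\partial u_n$. There is no genuine obstacle in this lemma; the only points deserving a word of care are that $\Omega$ is assumed open (so that moving along $\Pi$ from a point of $\Omega\cap\Pi$ keeps one inside $\Omega$) and that the conclusion is to be read as an identity on $\Omega\cap\Pi$, not on all of $\Omega$.
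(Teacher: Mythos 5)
Your argument is correct and is essentially the standard proof: the conclusion is just the statement that $\grad Q$ is orthogonal to every direction $e_i-e_n$ tangent to $\Pi$ at points of $\Omega\cap\Pi$, which your parametrization-plus-chain-rule computation establishes cleanly, including the correct reading of the conclusion as an identity on $\Omega\cap\Pi$. Note that the paper itself offers no proof of this lemma (it is quoted from \cite{cww} and \cite{hl}), and your argument coincides with the one given in those sources.
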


\section{Flat separable hypersurfaces }
\quad In this section, we classify the flat separable hypersurfaces $M^{n-1}$ of $\r^n$.
But in view of the fact that $M^{n-1}$ is flat if and only
if the sectional curvature function $K$ is identically zero, it follows that the problem we want to study is equivalent to finding all separable hypersurfaces of null sectional curvature.

\quad To get the classification, we will first deduce a statement related to the sectional curvature of $M^{n-1}$. Let $(x_1,...,x_n)$ be the canonical coordinates of $\r^n$ and $\{\partial /\partial x_1 ,..., \partial /\partial x_n\}$ the coordinate vector fields. Consider a separable hypersurface $M^{n-1}$ defined by the implicit equation
\begin{equation}
f_1(x_1)+...+f_n(x_n)=0, \quad x_k \in I_k \subset \r, \quad k =1,...,n. \label{sep-1}
\end{equation}
Denote by $f'_k=df/dx_k$, $f''_k=d^2f/dx_k^2$ and so on. Then, the unit normal vector field is
\[
N=\frac{1}{\sqrt{\sum_{k=1}^n f_k^{'2}}}(f'_1,...,f'_n).
\]

\quad By the regularity, we may assume that at least one of $f_1,...,f_n$ is not constant. Without lose of generality, we may assume $f'_n(x_n) \neq 0$, for each $x_n \in I_n$. We obey this assumption through the paper. Then, a basis of the tangent space $T_pM^{n-1}$ at some $p \in M^{n-1}$ is $\{X_1(p) ,...,X_{n-1} (p)\}$, where
\[
X_i=\frac{\partial}{\partial x_i}-\left ( \frac{f'_i}{f'_n} \right)\frac{\partial}{\partial x_n}, \quad i=1,...,n-1.
\]

\quad The covariant differentiation of $N$ is
$$
\tilde{\nabla}_{X_i}N=\frac{1}{\sqrt{\sum_{k=1}^n f_k^{'2}}}\left( (0,...,f''_i,...0)-\frac{f'_i}{f'_n}  (0,0,...,f''_n) \right) + \text{normal component},
$$
and so, for every $ i,j \in\{1,...,n-1  \}, i \neq j$,
$$
\langle \tilde{\nabla}_{X_i}N , X_i  \rangle =\frac{f''_i +f_i^{'2}f''_n/f_n^{'2}}{\sqrt{\sum_{k=1}^n f_k^{'2}}}, \quad \langle \tilde{\nabla}_{X_i}N , X_j \rangle =\frac{f'_if'_jf''_n/f_n^{'2}}{\sqrt{\sum_{k=1}^n f_k^{'2}}}.
$$
In addition, for every $ i,j \in\{1,...,n-1  \}, i \neq j$,
$$
\langle X_i , X_i  \rangle = 1+\left ( \frac{f'_i}{f'_n} \right)^2, \quad \langle X_i , X_j  \rangle =\frac{f'_if'_j}{f_n^{'2}}.
$$
\quad Now, let $K(X_i,X_j)$ be the curvature of the plane section spanned by $\{X_i,X_j\}$, for $i,j \in \{1,...,n-1 \}$ and $i < j$. A direct calculation yields
\begin{equation}
K(X_i,X_j)=\frac{f_i^{' 2}f''_jf''_n+f_j^{' 2}f''_if''_n+f_n^{'2}f''_if''_j}{\left ( \sum_{k=1}^nf_i^{'2} \right )(f_i^{'2}+f_j^{'2}+f_n^{'2})}, \quad i,j \in \{1,...,n-1 \},\quad i < j.  \label{sect.cur.}
\end{equation}

\quad The following result completely classifies the separable hypersurfaces $M^{n-1}$ of null sectional curvature under the condition $n>3$.

\begin{theorem} \label{th-1}
A separable hypersurface $M^{n-1}$ in $\r^n$ $(n>3)$ of null sectional curvature  is congruent to one of the following three hypersurfaces:
\begin{enumerate}
\item[(i)] a hyperplane,
\item[(ii)] $\Gamma \times \r^{n-2}$, where $\Gamma$ is a curve with non-null curvature included in a coordinate $2-$plane of $\r^n$,
\item[(iii)] $x_n=A\sqrt{ x_1...x_{n-1} }$, where $A$ is some positive constant.
\end{enumerate}
\end{theorem}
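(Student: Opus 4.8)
The plan is to read off the flatness condition from the curvature formula~\eqref{sect.cur.}, integrate the resulting overdetermined system of ordinary differential equations for the profiles $f_1,\dots,f_n$, and then identify the geometry of the solutions. Since $M^{n-1}$ is flat exactly when $K(\Pi)=0$ for every plane section $\Pi$, in particular $K(X_i,X_j)=0$ for all $1\le i<j\le n-1$, and since the denominator in~\eqref{sect.cur.} is strictly positive, one obtains
\[
f_i^{'2}f_j''f_n''+f_j^{'2}f_i''f_n''+f_n^{'2}f_i''f_j''=0,\qquad 1\le i<j\le n-1 .
\]
On the open set where no $f_k'$ vanishes, dividing by $f_i^{'2}f_j^{'2}f_n^{'2}$ and writing $\mu_k:=f_k''/f_k^{'2}$ turns this into the symmetric system $\mu_i\mu_j+\mu_j\mu_n+\mu_n\mu_i=0$ for $1\le i<j\le n-1$. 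A complete argument should moreover record the conditions coming from the remaining plane sections, and should check by the Gauss equation that the hypersurfaces in (i)--(iii) actually have the stated curvature property.

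The essential use of the hypothesis $n>3$ is as follows. Because $f_n'\neq 0$ everywhere, $M^{n-1}$ is locally the graph $x_n=\xi(x_1,\dots,x_{n-1})$, so that $x_1,\dots,x_{n-1}$ are free coordinates on $M^{n-1}$ while $x_n$ depends on all of them. Fix a pair $i<j\le n-1$; since $n-1\ge 3$ there is a further index $\ell\le n-1$, and moving $x_\ell$ (on an interval where $f_\ell$ is non-constant) varies $x_n$ while keeping $x_i,x_j$ fixed. Substituting this into the relation for the pair $(i,j)$ separates the variables and forces $\mu_n=f_n''/f_n^{'2}$ to be a constant $\lambda$ on $I_n$; Lemma~\ref{pre-lemma}, applied to the constraint $\sum_k f_k=0$, can be used to organize this separation. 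With $\mu_n\equiv\lambda$ each pairwise relation becomes $\mu_i\mu_j+\lambda(\mu_i+\mu_j)=0$, that is,
\[
(\mu_i+\lambda)(\mu_j+\lambda)=\lambda^2,\qquad 1\le i<j\le n-1 .
\]

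If $\lambda=0$ then $\mu_i\mu_j\equiv 0$ for all these pairs, so (each $\mu_i$ being a function of $x_i$ alone) at most one of $f_1,\dots,f_{n-1}$ is non-affine, and $f_n$ is affine; this yields a hyperplane when all the $f_k$ are affine, and a product $\Gamma\times\r^{n-2}$ with $\Gamma$ a plane curve of non-null curvature when exactly one profile is non-affine (a rotation of the remaining coordinates brings the equation to the form $x_2=g(x_1)$, independent of $x_3,\dots,x_n$). If $\lambda\neq 0$ the relations force $\mu_i+\lambda\neq 0$ and, on differentiating, each $\mu_i$ $(i\le n-1)$ to be constant; setting $\nu_i=\mu_i+\lambda$ and using $n-1\ge 3$ (so $\nu_1\nu_2=\nu_1\nu_3$, and so on), all the $\nu_i$ equal a common $\nu$ with $\nu^2=\lambda^2$, hence either $\mu_i\equiv 0$ for every $i\le n-1$ or $\mu_i\equiv-2\lambda$ for every $i\le n-1$. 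Integrating: in the first case the $f_i$ are affine and $f_n$ is logarithmic, so $\sum_k f_k=0$ becomes $x_n=L(x_1,\dots,x_{n-1})+C\exp(\beta_1x_1+\dots+\beta_{n-1}x_{n-1})$ with $L$ affine, a hypersurface whose defining function has rank-one Hessian and is therefore a cylinder $\Gamma\times\r^{n-2}$; in the second case $f_i=\tfrac{1}{2\lambda}\log|2\lambda x_i+c_i|+\text{const}$ and $f_n=-\tfrac{1}{\lambda}\log|{-\lambda x_n+c}|+\text{const}$, so $\sum_k f_k=0$ reads $(\lambda x_n-c)^2=C\prod_{i=1}^{n-1}|2\lambda x_i+c_i|$, which after translating the coordinates becomes $x_n^2=A^2\,x_1\cdots x_{n-1}$, the hypersurface in (iii).

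I expect the main obstacle to be the case analysis behind the two displayed reductions rather than any single computation. In particular: (a) the separation-of-variables step has to be carried out carefully for merely smooth (not analytic) profiles, keeping track of the subintervals on which $f_k'$ or $f_k''$ may vanish and of the set of values of $x_n$ actually attained on $M^{n-1}$; (b) the degenerate sub-cases in which several of the $f_k$ are constant must be treated separately, where, after splitting off the Euclidean product factors, one reduces to separable surfaces in $\r^3$ and invokes \cite{hl}; and (c) in the last case one must check that after the translations $x_i\mapsto x_i+c_i/(2\lambda)$ and $x_n\mapsto x_n-c/\lambda$ all remaining constants (including the factors $2\lambda$ and $\lambda^2$) collect into a single positive constant $A^2$, so that the normal form in (iii) is reached by a rigid motion alone.
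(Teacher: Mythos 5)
Your main line is essentially the paper's Case 2 with the role of $n>3$ played at a different spot. You encode flatness as $\mu_i\mu_j+\mu_j\mu_n+\mu_n\mu_i=0$ with $\mu_k=f_k''/f_k'^{2}$, which is exactly the paper's relation $X_i/X_i'+X_j/X_j'+X_n/X_n'=0$ after the substitution $u_k=f_k(x_k)$, $X_k=f_k'^{2}$; the paper then applies Lemma \ref{pre-lemma} to get $(X_k/X_k')'\equiv\alpha$ for one common constant and eliminates $\alpha\neq 0$ by summing the resulting relations over all pairs, which forces $n=3$, whereas you use the availability of a third free graph coordinate to separate variables and force $\mu_n$, and then each $\mu_i$, to be constant, after which $(\mu_i+\lambda)(\mu_j+\lambda)=\lambda^2$ reproduces the paper's Case 1, its affine branch of Case 2, and Subcase 2.1. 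This part is correct (your extra family $x_n=\mathrm{const}+Ce^{\beta\cdot x}$ is indeed a cylinder, hence item (ii); note $L$ is in fact constant there), and your obstacle (a) is at the same level of informality as the paper's own dichotomy ``$f_n$ affine'' versus ``$f_n''\neq 0$ everywhere''.

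The genuine gap is your point (b), and your proposed resolution of it would not close the proof. Your separation step needs a non-constant $f_\ell$ with $\ell\le n-1$, $\ell\notin\{i,j\}$; when all but three of the profiles are constant this variable does not exist, and the hypersurface is a cylinder $\Sigma^2\times\r^{\,n-3}$ over a flat separable surface $\Sigma^2\subset\r^3$. Invoking \cite{hl} there cannot land you back in the list (i)--(iii), because the flat separable surfaces of $\r^3$ form a strictly larger class than planes, cylinders and the cone $x_3=A\sqrt{x_1x_2}$: this is visible in the paper itself, whose Subcase 2.2 ($\alpha\neq 0$) is contradictory only when $n>3$. For instance $f_1=x_1^3$, $f_2=x_2^3$, $f_3=-x_3^3$ satisfies \eqref{K=0} on the surface $x_1^3+x_2^3=x_3^3$ (the left side equals $324\,x_1x_2x_3(x_3^3-x_1^3-x_2^3)$), so this non-cylindrical cone is a flat separable surface, and $\{x_1^3+x_2^3=x_3^3\}\times\r^{\,n-3}$ is separable (take the remaining $f_k$ constant) and has zero sectional curvature, yet it is congruent to none of (i)--(iii): its group of translational symmetries is $(n-3)$-dimensional, while a hyperplane or $\Gamma\times\r^{n-2}$ has at least an $(n-2)$-dimensional one and the hypersurface (iii) has none. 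So the degenerate case cannot be ``treated separately via \cite{hl}''; it has to be excluded, i.e. one needs at least four non-constant profiles, which is precisely the tacit assumption behind the paper's own step ``if $f_i$ is affine we arrive at item (ii)'' (that implication uses $f_i'\neq 0$; for constant $f_i$ every pair equation involving $i$ in \eqref{K=0} is vacuous and yields nothing). In short, your instinct that (b) is the delicate point is right, but as written your plan for it ends outside the claimed trichotomy rather than inside it, and the paper's proof offers no argument there to borrow.
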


\begin{proof}
By the assumption, we have
\begin{equation}
f_i^{' 2}f''_jf''_n+f_j^{' 2}f''_if''_n+f_n^{'2}f''_if''_j=0, \quad \text{for every } i,j \in \{1,...,n-1 \}, i < j.  \label{K=0}
\end{equation}
A trivial solution to Equation \eqref{K=0}  is obtained when each of  $f_1,...,f_n$ is an affine function. Obviously, this implies that $M^{n-1}$ is a hyperplane and we have the item (i) of Theorem \ref{th-1}. In addition, as can be seen in Equation \eqref{K=0}, independently from the values of $i$ or $j$, we have that $f_n^{'2}$ and $f''_n$ appear in each equation. So, we separate the investigation into two cases:

\bigskip

\textbf{Case 1.} Assume that $f_n$ is an affine function, say $f_n(x_n)=\lambda_n x_n+\mu_n$, $\lambda_n , \mu_n  \in \r$. Here, due to the assumption above, we have $\lambda_n \neq 0$. Equation \eqref{K=0} is now
$$
\lambda_n^2f''_if''_j=0, \quad \text{for every } i,j \in \{1,...,n-1 \},i < j,
$$
yielding that $f_i$ or $f_j$ is an affine function. Without lose of generality, we may take $f_j(x_j)=\lambda_j x_j+\mu_j$, $\lambda_j , \mu_j  \in \r$. Since $\lambda_n \neq 0$, we write
$$
x_n=\frac{-1}{\lambda_n}(f_1(x_1)+\lambda_2 x_2+...+\lambda_{n-1}x_{n-1}+\alpha), \quad \alpha=\mu_2+...+\mu_n ,
$$
which is indeed a cylindrical hypersurface with parametrization
\begin{equation*}
\left.
\begin{array}{l}
(x_1,...,x_{n-1})\mapsto \mathbf{x}(x_1,...,x_{n-1})\\
=(x_1,...,\frac{-1}{\lambda_n}(f_1(x_1)+\alpha)) +x_2(0,1,...,\frac{-\lambda_2}{\lambda_n})+...+x_{n-1}(0,...,1,\frac{-\lambda_{n-1}}{\lambda_n}).
\end{array}%
\right.
\end{equation*}
So, we have proved the item (ii) of Theorem \ref{th-1}. In addition, according to the result of Seo (see \cite[Theorem 1.2]{s}), this is also a translation hypersurface with null Gauss-Kronecker curvature.

\bigskip

\textbf{Case 2.} Assume that $f''_n \neq 0$ for every $x_n \in I_n$. It is important to point out that the roles of $f_i$ and $f_j$ are symmetric, that is, if one case is valid for $f_i$ then so is $f_j$. Hence, it is sufficient to discuss the cases relating to $f_i$.

\quad If $f_i$ is an affine function, then again we arrive to the item (ii) of Theorem \ref{th-1}. Suppose now that $f''_i \neq 0$ for every $x_i \in I_i$. By the symmetry, the same presume is also valid for $f_j$. Therefore, in the rest of Case 2, we will assume that
$$
\prod_{k=1}^nf''_k\neq 0, \quad \text{for every } x_k \in I_k.
$$

\quad From now on, we will use an analogous of the notations and arguments given in \cite{cww,hl} and \cite[p. 71]{n}. For this, set $u_k=f_k(x_k)$ $(k=1,...,n)$ such that $ u_1+...+u_n=0$. Next, we introduce
$$
X_k(u_k)=f^{'}_k(x_k)^2,  \quad k=1,...,n
$$
or, by the chain formula,
$$
X'_k(u_k)=2f''_k(x_k),  \quad k=1,...,n.
$$

\quad With these new notations, Equation \eqref{K=0} is now
\begin{equation}
X_iX'_jX'_n+X'_iX_jX'_n+X'_iX'_jX_n=0 ,  \quad
\text{for every } i,j \in \{1,...,n-1 \},\quad i < j, \label{K=0-new}
\end{equation}
and for every $(u_1,...,u_n)$ satisfying $ u_1+...+u_n=0$. We may rewrite \eqref{K=0-new} as
\begin{equation}
\frac{X_i}{X'_i}+\frac{X_j}{X'_j}+\frac{X_n}{X'_n}=0 , \quad \text{for every } i,j \in \{1,...,n-1 \},\quad i < j. \label{K=0-new2}
\end{equation}
Considering Lemma \ref{pre-lemma} and then differentiating Equation \eqref{K=0-new2} with respect to $u_i,u_j,u_n$, we may deduce
\begin{equation}
\left (\frac{X_i}{X'_i} \right)'=\left (\frac{X_j}{X'_j} \right)'=\left (\frac{X_n}{X'_n} \right)'= \alpha,  \quad \text{for every } i,j \in \{1,...,n-1 \}, i < j,\quad \alpha \in \r. \label{K=0-new3}
\end{equation}

\quad We have to distinguish two subcases:

\bigskip

\textbf{Subcase 2.1.} $\alpha =0$. This implies the existence of the nonzero constants $\lambda_1 ,.., \lambda_n$ such that
\[
X'_k=\frac{2}{\lambda_i} X_k,  \quad \text{for every } k=1,...,n.
\]
In terms of the previous notations, we have
\begin{equation}
f''_k = \frac{1}{\lambda_i} f_k^{'2},  \quad \text{for every } k=1,...,n, \label{lambda-second.deriv.}
\end{equation}
where, due to Equations \eqref{K=0} or \eqref{K=0-new2},
\begin{equation}
\lambda_i + \lambda_j + \lambda_n =0 ,
 \quad \text{for every } i,j \in \{1,...,n-1 \}, i < j. \label{lambda-2}
\end{equation}
By Equation \eqref{lambda-2}, we have the following system:
\begin{equation}
\left.
\begin{array}{l}
\lambda _{1}+\lambda _{2}+\lambda _{n}=0, \\
\lambda _{1}+\lambda _{3}+\lambda _{n}=0, \\
... \\
\lambda _{n-1}+\lambda _{n-2}+\lambda _{n}=0.%
\end{array}%
\right. \label{lambda-system}
\end{equation}
Simplifying in terms of $\lambda_n$,
$$
(n-2)(\lambda_1 +... + \lambda_{n-1} )+\frac{(n-1)(n-2)}{2}\lambda_n=0 ,
$$
or equivalently,
\begin{equation}
\lambda_n =\frac{-2}{n-1}(\lambda_1 +... + \lambda_{n-1}). \label{lambda-value}
\end{equation}
By the system \eqref{lambda-system} we may conclude $\lambda_i = \lambda_j$, for every $i,j \in \{1,...,n-1 \}$ and $i<j$. Set $\lambda_i =\lambda$ for every $i = 1,...,n-1 $. Hence, Equation \eqref{lambda-value} implies $\lambda_n=-2\lambda$.

\quad The solutions to Equation \eqref{lambda-second.deriv.} are
\begin{equation*}
\left.
\begin{array}{l}
f_i(x_i)=-\lambda \log (x_i+\mu_i)+\beta_i, \quad \mu_i , \beta_i \in \r,\\
f_n(x_n)=2\lambda \log (x_n+\mu_n)+\beta_n, \quad \mu_n , \beta_n \in \r,%
\end{array}%
\right.
\end{equation*}
for every $i=1,...,n-1$. Hence, Equation \eqref{sep-1} is now
\begin{equation*}
x_n+\mu_n=A\sqrt{ (x_1+\mu_1  )...(x_{n-1}+\mu_{n-1})} ,
\end{equation*}
where
\[
A=e^{-(\beta_1 +...+ \beta_n)/2\lambda}.
\]
Up to suitable translations of $x_1,...,x_n$, the statement of the item (iii) of Theorem \ref{th-1}  is proved.

\bigskip

\textbf{Subcase 2.2.} $\alpha \neq 0$. So, we can take $1/(2\alpha)$ in Equation \eqref{K=0-new3} instead of $\alpha$. The integration in Equation \eqref{K=0-new3} gives
\begin{equation}
\frac{X_i}{X'_i}=\frac{u_i+\mu_i}{2\alpha}, \quad i =1,...,n-1, \label{case2.2.-1}
\end{equation}
where $\mu_1,...,\mu_n \in \r$ and, due to Equation \eqref{K=0-new2},
\[
u_i+u_j+u_n+\mu_i+\mu_j+\mu_n=0, \quad \text{for every } i,j \in \{1,...,n-1 \}, i < j.
\]
Hence, we have
\[
\left.
\begin{array}{l}
u _{1}+u _{2}+u _{n}+\mu_1+\mu_2+\mu_n=0, \\
u _{1}+u _{3}+u _{n}+\mu_1+\mu_3+\mu_n=0, \\
... \\
u _{n-1}+u _{n-2}+u _{n}+\mu_{n-1}+\mu_{n-2}+\mu_n=0.%
\end{array}%
\right.
\]
Summing the above relations, we get
\[
(n-2)(u_1+...+u_{n-1}+\mu_1+...+\mu_{n-1})+\frac{(n-2)(n-1)}{2}(u_n+\mu_n)=0.
\]
Since $u_1+...+u_n=0$, the above equation writes as
\begin{equation}
\mu_1+...+\mu_n+\frac{n-3}{2}(u_n+\mu_n)=0. \label{case2.2.-2}
\end{equation}
Now, taking into account that $u_n=f_n(x_n)$ is not constant, it follows that $n$ must be $3$, which is not our case.
\end{proof}

\begin{remark}
  We point out that the hypersurface appearing in Theorem \ref{th-1}, item (iii), is a particular type of Cobb-Douglas hypersurface. Recall that a hypersurface $M^{n-1}$ of $\r^n$ is said to be a \emph{Cobb-Douglas hypersurface} if $M^{n-1}$ is the graph of the function $F$ defined by  $ F(x_1,...,x_{n-1})=A\prod_{i=1}^{n-1}x_i^{\alpha_i}$,
where $A,\alpha_1,...,\alpha_{n-1}$ are positive constants (see, e.g., \cite{VilcuAML}). Hence it is clear that the hypersurface appearing in Theorem \ref{th-1}, item (iii), is noting but a Cobb-Douglas hypersurface with $\alpha_1=...=\alpha_{n-1}=\frac{1}{2}$.
\end{remark}

\begin{remark}
From Theorem \ref{th-1} we can also obtain, by particularizing the type of separable hypersurface, the classifications of translation, factorable, quasi-sum and quasi-product hypersurfaces in $\r^n$ with vanishing sectional curvature, recovering in particular some known results  both in differential geometry and microeconomics. For example, if $M^{n-1}$ is a quasi-product hypersurface, then Theorem \ref{th-1} reduces to \cite[Theorem 3.3]{FW17}, while  if $M^{n-1}$ is a factorable hypersurface, then
Theorem \ref{th-1} reduces to \cite[Corollary 4.2 (viii)]{acdv}. We also note that if $M^{n-1}$ is a quasi-sum production hypersurface, then Theorem \ref{th-1} leads to a generalization of the classification established in \cite[Theorem 1.1 (iv$_3$)]{VV2015} under an additional hypothesis of interest in production theory, namely the so-called \emph{proportional marginal rate of substitution} property (for more details see \cite{VV2015}).
\end{remark}

\section{The case of non-zero constant sectional curvature}

\quad In this section, we will assume that $K(\Pi)$ is a nonzero constant, say $K(\Pi)=K_0/4$, $K_0 \neq 0$,  for every plane section $\Pi$ of $T_pM^{n-1}$ at some point $p \in M^{n-1}$. Hence, Equation \eqref{sect.cur.} writes as
\begin{equation}
\frac{K_0}{4}=\frac{f_i^{' 2}f''_jf''_n+f_j^{' 2}f''_if''_n+f_n^{'2}f''_if''_j}{\left ( \sum_{k=1}^nf_k^{'2} \right )(f_i^{'2}+f_j^{'2}+f_n^{'2})}, \quad \text{ for every } i,j \in \{1,...,n-1 \}, i < j.  \label{sect.cur.non}
\end{equation}
Obviously, none of $f'_1,...,f'_n$ is $0$ because otherwise we would have the contradiction $K_0 =0$. So, in terms of the notations that we have used in the previous section, we may rewrite Equation \eqref{sect.cur.non} as
\begin{equation}
K_0 (X_i+X_j+X_n) \sum_{k=1}^nX_k=X_iX'_jX'_n+X_jX'_iX'_n+X_nX'_iX'_j,   \label{sect.cur.non-new}
\end{equation}
for every $i,j \in \{1,...,n-1 \}, i < j$. Here we recall that $X_k \neq 0$ for every $k =1,...,n$.

\quad In order to give the main result of this section, we need to prove some lemmas which we will use later. But, as a prior investigation, we want to distinguish the situation $X'_k(u_k) =4\lambda$, for every $k =1,...,n$ and $\lambda \in \r$. Obviously, the case $\lambda=0$, namely $M^{n-1}$ is a hyperplane, is not our interest. Otherwise, i.e. $ \lambda \neq 0$, we have
$$
f_k(x_k)=\lambda(x_k + \mu_k)^2+\beta_k, \quad \text{for every } k =1,..., n,
$$
or, by Equation \eqref{sep-1},
$$
\sum_{k=1}^n(x_k + \mu_k)^2=-\frac{1}{\lambda}\sum_{k=1}^n \beta_k.
$$
Since $M^{n-1}$ is a hypersurface, the right-hand side is a positive real number and so it is a hypersphere, which is known to be a hypersurface of constant sectional curvature.

\quad From now on, we will discard  the case that $M^{n-1}$ is a hypersphere. Another investigation for the derivatives $X'_1,...,X'_n$ is as follows.

\begin{lemma} \label{lemma-K0-1}
None of $X'_1,...,X'_n$ is $0$ provided that $K_0$ is different from $0$.
\end{lemma}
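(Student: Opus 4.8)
The plan is to argue by contradiction. Suppose some $X'_k$ vanishes identically; by the symmetry in $\{1,\dots,n-1\}$ visible in Equation \eqref{sect.cur.non-new}, and since the variable $x_n$ was singled out only by the assumption $f'_n\neq 0$ (which is now automatic, as all $f'_k\neq 0$), we may split into the two cases: either $X'_n\equiv 0$, or $X'_m\equiv 0$ for some $m\in\{1,\dots,n-1\}$.

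First I would treat the case $X'_n\equiv 0$, i.e. $f''_n\equiv 0$, so $f_n$ is affine and $X_n$ is a (nonzero) constant. Then the right-hand side of \eqref{sect.cur.non-new} collapses to $X_n X'_i X'_j$, and the identity becomes, for every pair $i<j$ in $\{1,\dots,n-1\}$,
$$
K_0\,(X_i+X_j+X_n)\sum_{k=1}^n X_k \;=\; X_n\,X'_i X'_j .
$$
Since we have already discarded the hyperplane and the hypersphere, at least two of the functions $f_1,\dots,f_{n-1}$ are non-affine (if only one, say $f_1$, were non-affine, the pair $(i,j)=(2,3)$ would force $K_0\cdot(\text{something})\cdot\sum X_k=0$, hence $K_0=0$ since $\sum X_k=\|\grad F\|^2\neq 0$; here I am using $n>3$ so such a pair exists). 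Pick $i$ with $f''_i\neq 0$ on $I_i$. Dividing and isolating, one finds that $\big(K_0(X_i+X_j+X_n)\sum_k X_k\big)/(X_i' X_j')$ must be the constant $X_n$; differentiating in $u_j$ for a second choice $j'$ and comparing, or more directly applying Lemma \ref{pre-lemma} after writing the relation on the hyperplane $u_1+\dots+u_n=0$, yields a rigid ODE system for the $X_k$. The key point is that $\sum_k X_k$ is a function of all the $u_k$, whereas the right-hand side $X_n X_i' X_j'$ only involves $u_i,u_j$; restricting to the constraint surface and using Lemma \ref{pre-lemma} forces the mixed partials to match, which is impossible unless $K_0=0$. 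I expect this to be the cleanest subcase.

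Next I would treat $X'_m\equiv 0$ for some $m\le n-1$, say $m=1$, so $f_1$ is affine and $X_1$ is a nonzero constant. Now I use the equations with $i=1$: for every $j\in\{2,\dots,n-1\}$,
$$
K_0\,(X_1+X_j+X_n)\sum_{k=1}^n X_k \;=\; X_1\,X'_j X'_n ,
$$
while for pairs $i,j\in\{2,\dots,n-1\}$ not involving the index $1$ the full three-term right-hand side survives. Subtracting the $i=1$ relation for index $j$ from that for index $j'$ gives
$$
K_0\,(X_j-X_{j'})\sum_{k=1}^n X_k \;=\; X_1 X'_n\,(X'_j - X'_{j'}),
$$
and since $\sum_k X_k\neq 0$ and $X_1\neq 0$, this ties $X'_j-X'_{j'}$ to $X_j-X_{j'}$ in a way that, combined with Lemma \ref{pre-lemma} applied on $u_1+\dots+u_n=0$, again forces the $X_k$ (for $k\ge 2$) into the exponential/quadratic dichotomy of Section 3; feeding this back into the $i=1$ identity and matching the dependence on $u_n$ versus the dependence on $\sum_k X_k$ produces $K_0=0$, the contradiction. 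As in Case 1 of Theorem \ref{th-1}, the hypothesis $n>3$ is what guarantees enough distinct indices $j,j'$ to run this comparison.

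The main obstacle, and the step I would spend the most care on, is organizing the elimination so that Lemma \ref{pre-lemma} can be applied legitimately: the identity \eqref{sect.cur.non-new} holds only on the hyperplane $u_1+\dots+u_n=0$, so before differentiating one must rewrite it as "$Q(u_1,\dots,u_n)=0$ on the hyperplane" with a $Q$ that is genuinely smooth on a full neighborhood, then invoke the lemma to get $\partial Q/\partial u_1=\dots=\partial Q/\partial u_n$. The delicate bookkeeping is that $Q$ involves the global quantity $\sum_k X_k$, so its partial derivatives involve every $X'_k$ (including the ones we are assuming vanish); after imposing $X'_m\equiv 0$ (or $X'_n\equiv 0$) the resulting equalities among the partials should become an overdetermined linear system in $K_0$ and the surviving $X_k$, and the contradiction is extracted by a rank/degree count. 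Everything else — integrating the resulting autonomous ODEs, reading off that $f_k$ is logarithmic or quadratic — is routine and parallels Subcases 2.1–2.2 of the previous section, so I would only sketch it.
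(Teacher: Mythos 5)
There is a genuine gap: your proposal is a strategy outline whose decisive steps are asserted rather than proved. In both of your cases the contradiction is announced ("yields a rigid ODE system\dots which is impossible unless $K_0=0$", "matching the dependence on $u_n$ versus the dependence on $\sum_k X_k$ produces $K_0=0$", "the contradiction is extracted by a rank/degree count") but never derived, and these are exactly the nontrivial parts of the lemma. The heuristic you lean on in Case 1 --- that the left side of $K_0(X_i+X_j+X_n)\sum_k X_k = X_nX_i'X_j'$ depends on all variables while the right side depends only on $u_i,u_j$, hence impossibility --- is not an argument: on the hyperplane $u_1+\cdots+u_n=0$ the variables are constrained, and equations of precisely this mixed-dependence type do admit solutions (compare Subcase 2.1 of Theorem \ref{th-1}, where an analogous separated identity has genuine nonconstant solutions). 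Likewise, in Case 2 the claim that the subtracted relation $K_0(X_j-X_{j'})\sum_kX_k=X_1X_n'(X_j'-X_{j'}')$ "forces the $X_k$ into the exponential/quadratic dichotomy of Section 3" is unsupported: that dichotomy was obtained from the flat-case equation \eqref{K=0-new2}, which is not what you have here, so you cannot import it.

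What is missing is the actual elimination. The paper's proof (which your Case 2 begins to parallel) assumes $X_1'=0$, takes the pair $(i,j)=(1,2)$ in \eqref{sect.cur.non-new}, applies Lemma \ref{pre-lemma} to equate all partial derivatives, and reads off from the resulting chain \eqref{lemma-system} that $X_3'=\cdots=X_{n-1}'$; repeating with $(i,j)=(1,3)$ gives $X_2'=\cdots=X_{n-1}'=\lambda$, a nonzero constant, and substituting back into the first identity of the chain produces $\lambda K_0\bigl(\lambda_1+\sum_{k=2}^{n}X_k\bigr)=0$, which is the explicit contradiction (using that each $X_k=f_k'^2>0$). Note also that your separate case $X_n'\equiv 0$ is unnecessary: since $K_0\neq 0$ forces every $f_k'\neq 0$, all $n$ variables play symmetric roles and one may relabel so that the vanishing derivative is $X_1'$. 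Your overall plan (contradiction, Lemma \ref{pre-lemma}, comparison of different index pairs, positivity of $\sum_kX_k$) points in the right direction, but without carrying out the derivation of the constancy of the remaining $X_k'$ and the final identity, the proof is not there.
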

\begin{proof}
The proof is by contradiction. Without lose of generality, we assume $X'_1=0$, or equivalently, $X_1 = \lambda_1$, $\lambda_1 \in \r$, $\lambda_1 \neq 0$. For $(i,j)=(1,2 )$, Equation \eqref{sect.cur.non-new} writes as
\begin{equation}
K_0 (\lambda_1+X_2+X_n) \left ( \lambda_1 +\sum_{ k=2}^nX_k \right )-\lambda_1X'_2X'_n=0.   \label{sect.cur.non-lemma}
\end{equation}
Considering Lemma \ref{pre-lemma} and then differentiating Equation \eqref{sect.cur.non-lemma} with respect to $u_2,...,u_n$, we get
\begin{equation}
\left.
\begin{array}{l}
K_0X'_2\left ( 2(\lambda_1 +X_2+X_n)+ \sum_{ k=3}^{n-1}X_k \right ) -\lambda_1X''_2X'_n
=K_0X'_3(\lambda_1 +X_2+X_n), \\
K_0X'_3(\lambda_1 +X_2+X_n)=K_0X'_{4}(\lambda_1 +X_2+X_n)\\
... \\
K_0X'_{n-2}(\lambda_1 +X_2+X_n)=K_0X'_{n-1}(\lambda_1 +X_2+X_n), \\
K_0X'_{n-1}(\lambda_1 +X_2+X_n)=K_0X'_n\left ( 2(\lambda_1 +X_2+X_n)+ \sum_{ k=3}^{n-1}X_k \right )\\ \ \ \ \ \ \ \ \ \ \ \ \ \ \ \ \ \ \ \ \ \ \ \ \ \ \ \ \ \ \  \ \ \ \ \  -\lambda_1X'_2X''_n,%
\end{array}
\right. \label{lemma-system}
\end{equation}
where, due to $\lambda_1 +X_2+X_n \neq 0$ for every $(u_2 , u_n)$, we may conclude
$$
X'_3=X'_4=...=X'_{n-1}.
$$
Performing an analogous argument, in the case $(i,j)=(1,3)$, to Equation \eqref{sect.cur.non-new} we conclude
$$
X'_2=X'_4=...=X'_{n-1},
$$
and so
$$
X'_2=X'_3=...=X'_{n-1}=\lambda, \quad \lambda \in \r, \lambda \neq 0.
$$
Now, from the first equality in Equation \eqref{lemma-system}, we derive that
\[
\lambda K_0\left (\lambda_1 + \sum_{ k=2}^{n}X_k \right )=0,
\]
where none of the terms can be $0$, a contradiction.
\end{proof}

\begin{lemma} \label{lemma-K0-2}
$X'_i-X'_j $ is different from $0$, for every $i,j \in \{1,...,n-1 \},$ $ i < j$.
\end{lemma}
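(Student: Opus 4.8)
The plan is to argue by contradiction, following the same strategy that succeeded in Lemma \ref{lemma-K0-1}: assume that $X'_i = X'_j$ for some pair $i < j$ in $\{1,\dots,n-1\}$, and derive a contradiction by combining Lemma \ref{pre-lemma} with the structure of Equation \eqref{sect.cur.non-new}. Since $n > 3$, there is at least one further index available; fix indices so that, say, $X'_1 = X'_2 =: \phi$ (the general case is obtained by relabelling, using the symmetry of \eqref{sect.cur.non-new} in the first $n-1$ slots). The idea is that the equality $X'_1 = X'_2$ forces $X_1$ and $X_2$ to differ by an additive constant, $X_1 = X_2 + c$, so that on the hyperplane $u_1 + \dots + u_n = 0$ the two functions become essentially interchangeable; plugging this into the curvature identities for the pairs $(1,3)$ and $(2,3)$ and subtracting should collapse the system.

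First I would write down Equation \eqref{sect.cur.non-new} for the triples $(i,j,n) = (1,3,n)$ and $(2,3,n)$, substitute $X_1 = X_2 + c$ and $X'_1 = X'_2 = \phi$, and subtract the two relations. The left-hand sides both carry the factor $\sum_{k=1}^n X_k$, which is common, so after subtraction one is left with an identity of the form
\[
K_0\, c \sum_{k=1}^n X_k \;=\; \bigl( X'_2 X'_3 X'_n \text{-type terms that cancel} \bigr) + \bigl(\text{difference of the remaining products}\bigr),
\]
and a short computation should reduce this to something like $K_0 c \sum_k X_k = c\, \phi' X'_3 X'_n$ or an even simpler relation (here $X'_k$ appearing with a prime refers to $X''_k$ in the earlier notation; I will use $X'_k$, $X''_k$ consistently with the excerpt). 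If $c \neq 0$ we may cancel it and obtain a new functional identity in which $\sum_k X_k$ is expressed through products of the $X''$'s; applying Lemma \ref{pre-lemma} and differentiating in $u_1$ versus $u_2$ then yields $X''_1 = X''_2$, and iterating this kind of differentiation across all indices — exactly as in the proof of Lemma \ref{lemma-K0-1} — should force all the $X_k$ to be affine in their arguments, i.e. each $f_k$ quadratic, which is precisely the discarded hypersphere case (or forces $K_0 = 0$), a contradiction. If instead $c = 0$, so that $X_1 \equiv X_2$ outright, then $f_1$ and $f_2$ are literally the same function (up to a reflection $x_1 \mapsto -x_1$), and I would feed this coincidence back into \eqref{sect.cur.non-new} for the pair $(1,2)$: with $X_1 = X_2$, $X'_1 = X'_2$, that equation becomes $K_0(2X_1 + X_n)\sum_k X_k = 2 X_1 X'_1 X'_n + X_n (X'_1)^2$, whose two sides have incompatible behaviour under the Lemma \ref{pre-lemma} differentiation (differentiating in $u_1$ versus $u_n$ and comparing) unless the $X''$'s degenerate, again giving the hypersphere or $K_0 = 0$.

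The main obstacle I anticipate is bookkeeping rather than conceptual: after the subtraction step one must be careful that the "$\sum_k X_k$" factor really does survive as a nonvanishing common factor (it is nonzero by regularity, but one must check the coefficient on the right does not also vanish identically), and one must verify that the cascade of equalities obtained from Lemma \ref{pre-lemma} — of the shape $X''_k = X''_{k+1}$ for consecutive $k$, together with the two "boundary" equations involving $X''_1$ and $X''_n$ — actually closes up to force every $X''_k$ to be constant rather than leaving a one-parameter family. Because $n > 3$ there are at least two "interior" indices among $3,\dots,n-1$, which is what makes the chain of equalities rigid enough; the hypothesis $n > 3$ should enter exactly here, mirroring its role in Subcase 2.2 of Theorem \ref{th-1}. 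Once all $X''_k$ are constant, each $f_k$ is a quadratic polynomial, \eqref{sep-1} describes a quadric, and the only separable quadric of nonzero constant sectional curvature is the hypersphere, which has been explicitly set aside — contradiction, completing the proof.
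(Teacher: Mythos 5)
There is a genuine gap at the very first step of your plan, and it propagates through the whole argument. Here $X'_i$ is a function of $u_i$ alone and $X'_j$ a function of $u_j$ alone, and the hypothesis to be contradicted is that $X'_i(u_i)-X'_j(u_j)$ vanishes identically on the relevant piece of the hyperplane $u_1+\dots+u_n=0$, where $u_i$ and $u_j$ vary independently. This immediately forces $X'_i$ and $X'_j$ to be one and the same \emph{constant} $\lambda$, and $\lambda\neq 0$ by Lemma \ref{lemma-K0-1}; that constancy is the essential starting point, and your proposal never obtains it. Instead you read $X'_1=X'_2$ as equality of single-variable functions at a common argument, deduce $X_1=X_2+c$ with $\phi$ possibly nonconstant, and substitute this into \eqref{sect.cur.non-new}. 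That substitution is not legitimate: in \eqref{sect.cur.non-new} the function $X_1$ is evaluated at $u_1$ and $X_2$ at $u_2$, so $X_1(u_1)-X_2(u_2)$ is neither $c$ nor $0$, and the claimed outcome of subtracting the $(1,3)$ and $(2,3)$ equations (an identity like $K_0\,c\sum_k X_k=c\,\phi' X'_3X'_n$) is not what the computation yields; the $c=0$ branch, where you set $X_1=X_2$ inside the equation for the pair $(1,2)$, fails for the same reason. The endgame is also not closed: ``all $f_k$ quadratic'' describes a general quadric, whereas the case discarded before Lemma \ref{lemma-K0-1} is specifically $X'_k=4\lambda$ for every $k$ with the \emph{same} constant (the hypersphere), so you would still need to show the constants coincide or rule out non-spherical quadrics separately. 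Finally, your remark that $n>3$ provides ``at least two interior indices among $3,\dots,n-1$'' is false for $n=4$, so even the intended cascade of equalities is not available in the form you describe.

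For comparison, the paper's proof runs as follows: from $X'_i=X'_j=\lambda$ (a nonzero constant), \eqref{sect.cur.non-new} becomes \eqref{Xij}; applying Lemma \ref{pre-lemma} and differentiating in pairs of the remaining variables gives $X'_l=\mu$ constant for all $l\in\{1,\dots,n-1\}\setminus\{i,j\}$ and then $\mu=\lambda$; finally, differentiating with respect to $u_i$ and $u_n$, either $X''_n=0$, which forces $X'_n=\lambda$ and hence the excluded hypersphere, or $X''_n/(X'_n-\lambda)=K_0/\lambda$, which reduces \eqref{omega} to $X_n+\sum_{k=1}^{n}X_k=0$ and so, by Lemma \ref{pre-lemma} again, to $X'_n=\lambda/2$, contradicting $X''_n\neq 0$. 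Your plan shares the broad ``contradiction plus Lemma \ref{pre-lemma}'' strategy, but without the constancy of $X'_i,X'_j$ and with the invalid substitutions above, the computations you defer (``should reduce'', ``should force'') cannot be completed as described.
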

\begin{proof}
On the contrary, assume that $X'_i = X'_j$ for some $i,j \in \{1,...,n-1 \},$ $ i < j$. Hence, it follows $X'_i = X'_j=\lambda$, for a nonzero constant $\lambda$. Equation \eqref{sect.cur.non-new} is now
\begin{equation}
K_0 (X_i+X_j+X_n) \sum_{k=1}^nX_k-\lambda^2 X_n-\lambda (X_i+X_j)X'_n =0, \label{Xij}
\end{equation}
for some $i,j \in \{1,...,n-1 \}$. Using Lemma \ref{pre-lemma} in Equation \eqref{Xij} and then differentiating with respect to $u_l$ and $u_m$ (with $l\neq m$), we get
\[
K_0 (X_i+X_j+X_n) (X'_l-X'_m)=0, \quad \text{ for every } l,m \in \{1,...,n-1 \}-\{i,j\},\quad l\neq m.
\]
Here, we may conclude
\[
X'_i = X'_j=\lambda, \quad X'_l=\mu, \text{ for every } l \in \{1,...,n-1 \}-\{i,j\},
\]
where $\mu$ is a nonzero constant. After applying Lemma \ref{pre-lemma} in Equation \eqref{Xij},  differentiating with respect to $u_i$ and $u_l$, we deduce
\[
\lambda K_0 \sum_{k=1}^n X_k+K_0 (\lambda - \mu)(X_i+X_j+X_n) -\lambda^2X_n'=0.
\]
By using the same argument in the last equation, we derive $\lambda = \mu$.

\quad On the other hand, we apply Lemma \ref{pre-lemma} in Equation \eqref{Xij}, differentiating with respect to $u_i$ and $u_n$, and we find
\[
\left.
\begin{array}{l}
\lambda K_0 \left (X_i+X_j+X_n + \sum_{k=1}^nX_k  \right )- \lambda^2 X'_n \\
=K_0 \left (X_i+X_j+X_n + \sum_{k=1}^nX_k  \right )X'_n - \lambda^2 X'_n -\lambda (X_i+X_j)X''_n,
\end{array}
\right.
\]
or equivalently,
\[
K_0 \left (X_i+X_j+X_n + \sum_{k=1}^nX_k  \right )(X'_n - \lambda)-\lambda (X_i+X_j)X''_n=0,
\]
Here, if $X''_n=0$, then it must be $X'_n = \lambda$. But this case has been already discarded, being the case of a hypersphere. So, we have $(X'_n - \lambda)X''_n \neq 0$, yielding
\begin{equation}
K_0 \left (X_i+X_j+X_n + \sum_{k=1}^nX_k  \right )-\lambda (X_i+X_j)\Omega=0, \label{omega}
\end{equation}
where $\Omega =X''_n/(X'_n - \lambda)$. Again, we apply Lemma \ref{pre-lemma} to this equation where we differentiate with respect to $u_i$ and $u_l$, obtaining $\Omega =K_0/\lambda$. Replacing in Equation \eqref{omega},
$$
X_n + \sum_{k=1}^nX_k=0.
$$
Here, by Lemma \ref{pre-lemma} we may conclude $X'_n=\lambda /2$ or $X''_n=0$, a contradiction.
\end{proof}

\quad Next, Equation \eqref{sect.cur.non-new} writes as
\begin{equation}
A_{ij}+B_{ij}X_n+C_{ij}X'_n=K_0X_n^2, \quad \text{ for every } i,j \in \{1,...,n-1 \}, i < j, \label{ABC}
\end{equation}
where
$$
\left.
\begin{array}{l}
A_{ij}(u_1,...,u_{n-1})=-K_0(X_i+X_j)\sum_{k=1}^{n-1}X_k, \\
B_{ij}(u_1,...,u_{n-1})=-K_0\left (X_i +X_j + \sum_{k=1}^{n-1}X_k \right ) +X'_iX'_j, \\
C_{ij}(u_i,u_j)=X_iX'_j+X'_iX_j.%
\end{array} \label{lemma-sys}
\right.
$$

\quad Our third lemma claims that nowhere the coefficient function $C_{ij}$ is $0$.

\begin{lemma} \label{lemma-K0-3}
The function $C_{ij}$ given in Equation \eqref{ABC} is always different from $0$.
\end{lemma}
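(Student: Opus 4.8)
The plan is to argue by contradiction: suppose $C_{ij}=X_iX_j'+X_i'X_j\equiv 0$ on some nonempty open set for some fixed pair $i<j$ in $\{1,\dots,n-1\}$. The first observation is that $C_{ij}$ is exactly $(X_iX_j)'$ viewed as a function along the constraint, so $C_{ij}=0$ forces $X_iX_j$ to be constant; writing $X_iX_j = c$ with $c\neq 0$ (since no $X_k$ vanishes, by the remark before Lemma \ref{lemma-K0-1}), we get $X_j = c/X_i$, hence $X_j' = -cX_i'/X_i^2$. Substituting this into Equation \eqref{ABC} turns the relation into an identity in which $X_n, X_n'$ and the remaining $X_k$'s ($k\neq i,j,n$) appear, with coefficients built only from $X_i$ (and $u_i$, through the constraint $u_1+\dots+u_n=0$). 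The idea is then to exploit Lemma \ref{pre-lemma} repeatedly, differentiating the resulting equation in the independent directions $u_\ell$ ($\ell\neq i$) to peel off information, exactly in the style of the proofs of Lemmas \ref{lemma-K0-1} and \ref{lemma-K0-2}.

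Concretely, I would first reduce modulo the other variables: differentiate \eqref{ABC} (after the substitution) with respect to two indices $\ell,m\in\{1,\dots,n-1\}\setminus\{i,j\}$ with $\ell\neq m$ — this uses $n>3$ so that such a pair exists, or rather $n\ge 5$; the case $n=4$ (where $\{1,2,3\}\setminus\{i,j\}$ is a single index) has to be handled slightly separately, differentiating instead in that one index together with $u_n$. Using Lemma \ref{pre-lemma} the cross-derivatives must all agree, which (since the coefficient of $X_n$, namely $B_{ij}$, carries the only $u_\ell$-dependence through $\sum X_k$) forces $K_0(X_\ell'-X_m')=0$, hence all the $X_\ell'$ for $\ell\notin\{i,j\}$ are equal to a common nonzero constant $\mu$, so $X_\ell=\mu u_\ell+\text{const}$. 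Plugging these affine expressions back in and again applying Lemma \ref{pre-lemma} in the pair $(u_i,u_\ell)$ should produce a further constraint tying $X_i$ to $X_n$; in particular one expects to be forced into a relation of the form $K_0\bigl(\text{linear in }X_n\bigr)+(\text{something})X_n''=0$ together with $X_i$ satisfying an ODE, and then — as in Lemma \ref{lemma-K0-2} — either $X_n''=0$ (the discarded hypersphere/hyperplane case) or one more application of Lemma \ref{pre-lemma} collapses everything to a contradiction with $X_n$ nonconstant, or with $C_{ij}\neq 0$ already proven impossible.

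The main obstacle I anticipate is bookkeeping: after the substitution $X_j=c/X_i$ the coefficients in \eqref{ABC} are rational rather than polynomial in $X_i$, so clearing denominators and keeping track of which terms depend on which $u_k$ takes care. The genuinely delicate point is the separation-of-variables step: Lemma \ref{pre-lemma} only gives equality of partials on the hyperplane $\sum u_k=0$, so one must be careful that the functions being differentiated are genuinely functions on $\r^n$ (they are: each $X_k$ depends only on $u_k$), and then read off that a sum $\sum_{k} X_k$ has partials $X_k'$ which, matched pairwise, force the affine conclusion. A secondary subtlety is the low-dimensional edge case: with $n=4$ and $\{i,j\}=\{1,2\}$ there is only the single free index $3$ besides $n=4$, so the "differentiate in two of the remaining indices" trick is unavailable and one must instead combine $u_3$ with $u_4$, which changes the algebra slightly; I would treat $n\ge 5$ and $n=4$ as two short cases but expect both to terminate in the same type of contradiction (either $X_n'$ constant, i.e.\ the already-excluded hypersphere, or $X_n$ constant, contrary to hypothesis).
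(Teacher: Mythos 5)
Your opening step is incorrect, and everything else rests on it. From $C_{ij}=X_iX'_j+X'_iX_j=0$ you conclude that $X_iX_j$ is constant and then write $X_j=c/X_i$, $X'_j=-cX'_i/X_i^2$. But $X_i$ depends only on $u_i$ and $X_j$ only on $u_j$, so $C_{ij}$ is the \emph{sum} of the two partial derivatives of $X_iX_j$, not a derivative along a single direction, and its vanishing does not make $X_iX_j$ constant: dividing by $X_iX_j\neq 0$ gives $X'_i/X_i=-X'_j/X_j$, and since the two sides depend on different variables both equal a constant $\lambda$, whence $X_i=a e^{\lambda u_i}$, $X_j=b e^{-\lambda u_j}$ and $X_iX_j=ab\, e^{\lambda(u_i-u_j)}$, constant only if $\lambda=0$, which is excluded by Lemma \ref{lemma-K0-1}. (Had $X_iX_j$ really been constant, then $X_i$ and $X_j$ would each be constant and you would be done immediately by Lemma \ref{lemma-K0-1}; the fact that your plan still requires many further steps is a symptom of the faulty reduction. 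Also, $X'_j=-cX'_i/X_i^2$ implicitly differentiates $X_j$ with respect to $u_i$, which is meaningless here.) This is exactly where the paper's proof goes differently: it performs the correct separation of variables, $X'_i=\lambda X_i$, $X'_j=-\lambda X_j$ with $\lambda\neq 0$, substitutes into \eqref{sect.cur.non-new}, and reaches a contradiction with Lemma \ref{lemma-K0-2} after two applications of Lemma \ref{pre-lemma}, differentiating only in $u_i$ and $u_j$, so no case distinction on $n$ is needed.

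The rest of your plan is also too loose to count as a proof, and it overlooks a shortcut that it creates for itself. If $C_{ij}=0$, then \eqref{ABC} reads $A_{ij}+B_{ij}X_n=K_0X_n^2$, and applying Lemma \ref{pre-lemma} with the pair $u_\ell,u_m\notin\{i,j\}$ (available only when $n\geq 5$) gives $-K_0(X'_\ell-X'_m)(X_i+X_j+X_n)=0$; since each $X_k=f_k'^{\,2}>0$, this forces $X'_\ell=X'_m$, which is already the desired contradiction with Lemma \ref{lemma-K0-2} --- yet you record it merely as ``all $X'_\ell$ equal a common constant $\mu$'' and continue with a chain of expectations (``should produce a further constraint'', ``one expects to be forced into a relation'') that is never carried out, together with an unresolved $n=4$ case. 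As written, the proposal neither establishes the lemma nor matches the paper's shorter, dimension-independent argument.
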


\begin{proof}
By contradiction, suppose that
\[
X_iX'_j+X'_iX_j=0, \quad \text{ for some } i,j \in \{1,...,n-1 \},\quad i < j.
\]
This implies the existence of a nonzero constant $\lambda$ such that
\[
X'_i=\lambda X_i, \quad X'_j=-\lambda X_j, \quad \text{ for some } i,j \in \{1,...,n-1 \},\quad i < j.
\]
Hence, Equation \eqref{sect.cur.non-new} is
\begin{equation}
K_0 (X_i+X_j+X_n) \sum_{k=1}^nX_k+\lambda^2 X_iX_jX_n =0. \label{C=0}
\end{equation}
Applying Lemma \ref{pre-lemma} to this equation where we differentiate with respect to $u_i$ and $u_j$,
$$
\left.
\begin{array}{l}
K_0\lambda \left (X_i+X_j+X_n + \sum_{k=1}^nX_k \right)X_i +\lambda^3 X_iX_jX_n
\\
=-K_0\lambda \left (X_i+X_j+X_n + \sum_{k=1}^nX_k \right)X_j -\lambda^3 X_iX_jX_n,
\end{array}
\right.
$$
or equivalently,
\begin{equation}
K_0\left (X_i+X_j+X_n + \sum_{k=1}^nX_k \right)(X_i +X_j)+2\lambda^2 X_iX_jX_n=0, \label{C=0-1}
\end{equation}
for some $i,j \in \{1,...,n-1 \}$, $i < j$. From Equations \eqref{C=0} and \eqref{C=0-1},
\begin{equation}
\left (X_i+X_j+X_n + \sum_{k=1}^nX_k \right)(X_i +X_j)-2(X_i+X_j+X_n) \sum_{k=1}^nX_k=0. \label{C=0-2}
\end{equation}
By Lemma \ref{pre-lemma}, where we differentiate with respect to $u_i$ and $u_j$, we may conclude
$$
-X'_i \left (X_n+\sum_{i \neq k \neq j}^{n} X_k \right)  = -X'_j \left (X_n+\sum_{i \neq k \neq j}^{n} X_k \right)
$$
or equivalently,
$$
\left (X_n+\sum_{i \neq k \neq j}^{n} X_k \right) (X_i+X_j)=0, \quad \text{ for some } i,j \in \{1,...,n-1 \},\quad i < j.
$$
This is impossible in view of Lemma \ref{lemma-K0-2}.
\end{proof}

\quad We now go back to Equation \eqref{ABC}. We use Lemma \ref{pre-lemma} in Equation \eqref{ABC}, where we differentiate with respect to $u_i$ and $u_j$, obtaining
\begin{equation}
A_{ij,u_i}-A_{ij,u_j}+(B_{ij,u_i}-B_{ij,u_j})X_n+(C_{ij,u_i}-C_{ij,u_j})X'_n=0,   \label{ABCij}
\end{equation}
for every $i,j \in \{1,...,n-1 \}$, $i < j$. Here $A_{ij,u_i}=\partial A_{ij} / \partial u_i$ and so on. A direct calculation leads to
$$
\left.
\begin{array}{l}
D_{ij}:=A_{ij,u_i}-A_{ij,u_j}=-K_0(X'_i-X'_j)\left (X_i+X_j + \sum_{k=1}^{n-1}X_k \right ), \\
E_{ij}:=B_{ij,u_i}-B_{ij,u_j}=-2K_0(X'_i-X'_j)+X''_iX'_j-X'_iX''_j, \\
F_{ij}:=C_{ij,u_i}-C_{ij,u_j}=X''_iX_j-X_iX''_j.
\end{array}
\right.
$$
Hence, Equation \eqref{ABCij} is now
\begin{equation}
D_{ij}+E_{ij}X_n+F_{ij}X'_n=0, \quad \text{ for every } i,j \in \{1,...,n-1 \},\quad i < j. \label{DEFij}
\end{equation}

\quad As previously mentioned, we will present two lemmas more. Our next lemma claims that none of $D_{ij}$, $E_{ij}$ and $F_{ij}$ is identically $0$.

\begin{lemma} \label{lemma-K0-4}
The functions $D_{ij}$, $E_{ij}$ and $F_{ij}$ given in Equation \eqref{DEFij} are always different from $0$.
\end{lemma}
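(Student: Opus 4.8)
The plan is to establish the three non-vanishing assertions one at a time, each by contradiction, combining Lemmas \ref{lemma-K0-1}, \ref{lemma-K0-2} and \ref{lemma-K0-3} with repeated applications of Lemma \ref{pre-lemma} (restrict an identity to the hyperplane $u_1+\dots+u_n=0$, differentiate, and equate all the resulting partial derivatives). The structural fact driving the argument is the asymmetry in \eqref{DEFij}: the coefficient $D_{ij}$ depends on all of $u_1,\dots,u_{n-1}$, whereas $E_{ij}$ and $F_{ij}$ depend only on $u_i$ and $u_j$.

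The case $D_{ij}\neq 0$ is the easiest. Since $D_{ij}=-K_0(X'_i-X'_j)\big(X_i+X_j+\sum_{k=1}^{n-1}X_k\big)$ with $K_0\neq 0$ and $X'_i-X'_j$ nowhere zero by Lemma \ref{lemma-K0-2}, the vanishing of $D_{ij}$ forces $X_i+X_j+\sum_{k=1}^{n-1}X_k\equiv 0$; as $n>3$ there is an index $\ell\in\{1,\dots,n-1\}\setminus\{i,j\}$, and differentiating in $u_\ell$ gives $X'_\ell=0$, contradicting Lemma \ref{lemma-K0-1}.

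For $F_{ij}\neq 0$ and $E_{ij}\neq 0$ the treatment is parallel. Assuming $F_{ij}\equiv 0$ (respectively $E_{ij}\equiv 0$) collapses \eqref{DEFij} to $D_{ij}+E_{ij}X_n=0$ (respectively $D_{ij}+F_{ij}X'_n=0$). Applying Lemma \ref{pre-lemma} and differentiating in a ``spectator'' variable $u_\ell$ with $\ell\notin\{i,j\}$, only the term $D_{ij,u_\ell}=-K_0(X'_i-X'_j)X'_\ell$ survives, so all these quantities coincide; hence $X'_\ell=X'_m$ for any two spectator indices, contradicting Lemma \ref{lemma-K0-2}. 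This works whenever $n\geq 5$. When $n=4$ only one index lies outside $\{i,j\}$, so instead one equates its partial with the $u_n$-partial and differentiates once more; here the extra input is that $F_{ij}=0$ yields, by separating variables (the $X_k$ being nowhere zero since no $f'_k$ vanishes), a constant $c$ with $X''_i=cX_i$ and $X''_j=cX_j$, while $E_{ij}=0$ produces the analogous second-order relation. Feeding these in and iterating Lemma \ref{pre-lemma} once more forces either $X'_\ell=X'_m$ again, or $X''_n\equiv 0$, i.e.\ the hypersphere situation already discarded.

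I expect the dimension $n=4$ to be the main obstacle: there the quick ``compare two spectator variables'' shortcut is unavailable, so one must differentiate an extra time and disentangle identities among the third derivatives $X'''_k$, keeping careful track of which degenerate outcomes reduce to configurations (hyperplane, hypersphere) that have already been excluded. The previously established Lemmas \ref{lemma-K0-1}--\ref{lemma-K0-3} are exactly what keep this case analysis from proliferating.
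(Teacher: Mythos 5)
Your treatment of $D_{ij}$ is fine, and for $n\geq 5$ your spectator-variable argument for $E_{ij}$ and $F_{ij}$ is correct and genuinely slicker than the paper's: since $E_{ij}$, $F_{ij}$, $X_n$, $X'_n$ do not involve a spectator variable, applying Lemma \ref{pre-lemma} to $D_{ij}+E_{ij}X_n=0$ (resp.\ $D_{ij}+F_{ij}X'_n=0$) and comparing the partials in two spectator variables $u_\ell$, $u_m$ gives $-K_0(X'_i-X'_j)X'_\ell=-K_0(X'_i-X'_j)X'_m$, hence $X'_\ell=X'_m$, contradicting Lemma \ref{lemma-K0-2}.

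The genuine gap is $n=4$, which your proposal does not actually settle. With only one index outside $\{i,j\}$ the conclusion ``$X'_\ell=X'_m$ again'' has no content, and the alternative outcome you invoke, $X''_n\equiv 0$, is \emph{not} the already-discarded hypersphere case: that case required all the $X'_k$ to equal one and the same constant, whereas $X''_n\equiv 0$ only says $X'_n$ is constant and yields no contradiction by itself. So for $n=4$ your $F_{ij}$ and $E_{ij}$ cases remain a plan (``differentiate once more and disentangle the third derivatives''), not a proof — and this is exactly where the paper does its heavy lifting. The paper avoids spectator variables entirely: from $F_{ij}=0$ it separates variables to get $X''_i=\lambda X_i$, $X''_j=\lambda X_j$ (and handles $\lambda=0$ directly), resp.\ from $E_{ij}=0$ it gets $X''_i=\lambda X'_i-2K_0$, $X''_j=\lambda X'_j-2K_0$; it substitutes these into \eqref{DEFij} and applies Lemma \ref{pre-lemma} twice more, each time taking only the difference of the $u_i$- and $u_j$-derivatives, then forms a linear combination (multiplying the first identity by $-\lambda$, resp.\ $-\lambda^2$, and adding the third) that collapses the system: in the $F_{ij}$ case one solves for $X_n$ and is reduced to $X_n+\sum_{i\neq k\neq j}^{n}X_k=0$, in the $E_{ij}$ case one reaches $-12K_0+3\lambda(X'_i+X'_j)+2\lambda X'_n=0$, and a final application of Lemma \ref{pre-lemma} forces $X'_i=X'_j$, contradicting Lemma \ref{lemma-K0-2}. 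That computation works uniformly for all $n>3$, including $n=4$; to complete your proof you would need to supply this (or an equally explicit) argument in the four-dimensional case.
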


\begin{proof}
The proof is by contradiction and we do it into separated cases.

\textbf{Case 1.} $D_{ij}=0$, for some $i,j \in \{1,...,n-1 \}$, $i < j$. Then
\[
K_0(X'_i-X'_j)\left (X_i+X_j + \sum_{k=1}^{n-1}X_k \right )=0,
\]
which is possible only if $X'_i-X'_j=0$, for some $i,j \in \{1,...,n-1 \}$, $i < j$. However, this contradicts with Lemma \ref{lemma-K0-2}.

\textbf{Case 2.} $F_{ij}=0$, for some $i,j \in \{1,...,n-1 \}$, $i < j$. Then, $X''_iX_j-X_iX''_j =0$, yielding a constant $\lambda$ such that
$$
\frac{X''_i}{X_i}=\lambda =\frac{X''_j}{X_j}, \quad  \text{for some } i,j \in \{1,...,n-1 \}.
$$
If $\lambda =0$, then Equation \eqref{DEFij} implies
$$
-K_0(X'_i-X'_j)\left (X_i+X_j + \sum_{k=1}^{n-1}X_k \right )-2K_0(X'_i-X'_j)X_n=0,
$$
or equivalently,
$$
X_i+X_j +X_n+ \sum_{k=1}^{n}X_k=0,  \quad  \text{for some } i,j \in \{1,...,n-1 \}.
$$
Applying Lemma \ref{pre-lemma}, the contradiction $X'_i-X'_j=0$ can be derived. Assume now that $\lambda \neq 0$. Then, Equation \eqref{DEFij} writes as
\begin{equation}
\left.
\begin{array}{l}
G_{ij}:=-K_0(X'_i-X'_j)\left (X_i+X_j + \sum_{k=1}^{n-1}X_k \right ) \\
+ \left ( -2K_0(X'_i-X'_j)+\lambda (X_iX'_j-X'_iX_j) \right )X_n=0, \label{Gij}
\end{array}
\right.
\end{equation}
for some $i,j \in \{1,...,n-1 \}$. Using Lemma \ref{pre-lemma}, the partial derivatives of $G_{ij}$ with respect to $u_i$ and $u_j$ satisfy for some $i,j \in \{1,...,n-1 \}$:
$$
\left.
\begin{array}{l}
H_{ij}:= G_{ij,u_i}-G_{ij,u_j} \\
=-\lambda K_0(X_i+X_j)\left (X_i+X_j + \sum_{k=1}^{n-1}X_k \right ) \\ -2K_0(X'_i-X'_j)^2+ \left ( -2\lambda K_0 (X_i+X_j) + 2\lambda X'_iX'_j-2\lambda^2X_iX_j\right )X_n =0.
\end{array}
\right.
$$
Again proceeding the same argument,
\begin{equation}
\left.
\begin{array}{l}
 H_{ij,u_i}-H_{ij,u_j} =-\lambda K_0(X'_i-X'_j)\left (X_i+X_j + \sum_{k=1}^{n-1}X_k \right ) \\
-6 \lambda K_0 (X_i+X_j)(X'_i-X'_j)\\ +(-2\lambda K_0 (X'_i-X'_j) +4\lambda^2(X_iX'_j-X'_iX_j))X_n =0.
\end{array} \label{Hij}
\right.
\end{equation}
Multiplying Equation \eqref{Gij} by $-\lambda$ and adding Equation \eqref{Hij}, we get
\[
-2K_0(X_i+X_j)(X'_i-X'_j)+\lambda(X_iX'_j-X'_iX_j)X_n =0, \quad  \text{for some } i,j \in \{1,...,n-1 \}.
\]
Here the coefficient of $X_n$ is different from $0$ because otherwise we would have $X_i+X_j=0$ or $X'_i-X'_j=0$, this being impossible due to Lemma \ref{lemma-K0-2}. So, we conclude
$$
X_n =2K_0\frac{(X_i+X_j)(X'_i-X'_j)}{\lambda (X_iX'_j-X'_iX_j)}\quad  \text{for some } i,j \in \{1,...,n-1 \}.
$$
Substituting into Equation \eqref{Gij},
$$
 X_n +\sum_{i \neq k \neq j}^n X_k=0.
$$
Considering Lemmas \ref{pre-lemma} and \ref{lemma-K0-2}, the last equation yields a contradiction.

\textbf{Case 3.} $E_{ij}=0$, for some $i,j \in \{1,...,n-1 \}$, $i < j$. Then,
$$
2K_0(X'_i-X'_j)-X''_iX'_j+X'_iX''_j=0, \quad \text{ for some } i,j \in \{1,...,n-1 \},\quad i < j.
$$
Dividing by $X'_iX'_j$, one arrives to the existence of a constant $\lambda$ where
$$
\frac{X''_i+2K_0}{X'_i}= \lambda = \frac{X''_j+2K_0}{X'_j},  \quad \text{ for some } i,j \in \{1,...,n-1 \}, i < j,
$$
or equivalently
$$
X''_i= \lambda X'_i -2K_0,\quad X''_j= \lambda X'_j -2K_0, \quad \text{ for some } i,j \in \{1,...,n-1 \}.
$$
Replacing in Equation \eqref{DEFij},
\begin{equation}
\left.
\begin{array}{l}
L_{ij}:=-K_0(X'_i-X'_j)\left (X_i+X_j + \sum_{k=1}^{n-1}X_k \right ) \\
+(\lambda (X'_iX_j-X_iX'_j)+2K_0(X_i-X_j))X'_n=0. \label{Lij}
\end{array}
\right.
\end{equation}
Due to Lemma \ref{pre-lemma}, the partial derivatives of $L_{ij}$ satisfy for some $i,j \in \{1,...,n-1 \}$,
$$
\left.
\begin{array}{l}
L_{ij,u_i}-L_{ij,u_j}=-\lambda K_0(X'_i+X'_j) \left (X_i+X_j + \sum_{k=1}^{n-1}X_k \right )  \\
+4K^2_0 \left (X_i+X_j + \sum_{k=1}^{n-1}X_k \right ) -2K_0(X'_i-X'_j)^2 \\
+(\lambda^2 (X'_iX_j+X_iX'_j)-2\lambda K_0(X_i+X_j)-2\lambda X'_iX'_j+2K_0(X'_i+X'_j))X'_n =0.
\end{array}
\right.
$$
The same argument yields
\begin{equation}
\left.
\begin{array}{l}
-\lambda^2K_0(X'_i-X'_j)\left (X_i+X_j + \sum_{k=1}^{n-1}X_k \right ) \\ -6\lambda K_0 (X'_i+X'_j)(X'_i-X'_j) +24K^2_0(X'_i-X'_j)
\\
+(\lambda^3 (X'_iX_j-X_iX'_j)+2\lambda^2 K_0(X_i-X_j)-4\lambda K_0(X'_i-X'_j) )X'_n =0, \label{Eij}
\end{array}
\right.
\end{equation}
for some $i,j \in \{1,...,n-1 \}$. Here, due to Lemma \ref{lemma-K0-2}, $\lambda \neq 0$. Multiplying Equation \eqref{Lij} by $-\lambda^2$ and adding Equation \eqref{Eij}, one obtains
$$
M_{ij}:=-12K_0+3\lambda(X'_i+X'_j)+2\lambda X'_n=0, \quad \text{ for some } i,j \in \{1,...,n-1 \},\quad i < j,
$$
Using Lemma \ref{pre-lemma}, we have $M_{ij,u_i}-M_{ij,u_j}=0$, yielding the following contradiction
$$
3\lambda(X''_i-X''_j)=3\lambda^2(X'_i-X'_j)=0, \quad \text{ for some } i,j \in \{1,...,n-1 \},\quad i <j.
$$
\end{proof}

\quad Now, by Lemma \ref{lemma-K0-4}, Equation \eqref{DEFij} is ready so that Lemma \ref{pre-lemma} can be applied. Differentiating Equation \eqref{DEFij} with respect to $u_i$ and $u_j$,
\begin{equation}
D_{ij,u_i} - D_{ij,u_j} +(E_{ij,u_i} - E_{ij,u_j})X_n+(F_{ij,u_i} - F_{ij,u_j})X'_n=0,  \label{DEFijnew}
\end{equation}
for every $i,j \in \{1,...,n-1 \}$, $i < j$. The following is the last lemma before presenting the proof of the theorem.

\begin{lemma} \label{lemma-K0-5}
The functions $D_{ij}$, $E_{ij}$ and $F_{ij}$ satisfy
$$
\frac{D_{ij,u_i} - D_{ij,u_j}}{D_{ij}}=\frac{E_{ij,u_i} - E_{ij,u_j}}{E_{ij}}=\frac{F_{ij,u_i} - F_{ij,u_j}}{F_{ij}},
$$
or equivalently,
\begin{equation}
\left ( \frac{D_{ij}}{E_{ij}} \right )_{u_i}=\left ( \frac{D_{ij}}{E_{ij}} \right )_{u_j}, \text{ } \left ( \frac{D_{ij}}{F_{ij}}\right )_{u_i}=\left (\frac{D_{ij}}{F_{ij}}\right )_{u_j}, \text{ }\left ( \frac{E_{ij}}{F_{ij}}\right )_{u_i}=\left ( \frac{E_{ij}}{F_{ij}} \right )_{u_j}, \label{defratio}
\end{equation}
for every $i,j \in \{1,...,n-1 \}$, $i < j$.
\end{lemma}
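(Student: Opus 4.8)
The plan is to read Equations \eqref{DEFij} and \eqref{DEFijnew} together as a linear system. Recall that $D_{ij},E_{ij},F_{ij}$, and hence also the three differences $D_{ij,u_i}-D_{ij,u_j}$, $E_{ij,u_i}-E_{ij,u_j}$, $F_{ij,u_i}-F_{ij,u_j}$, are functions of $u_1,\dots,u_{n-1}$ alone, whereas $X_n$ and $X'_n$ depend only on $u_n$; moreover, by Lemmas \ref{lemma-K0-2}, \ref{lemma-K0-3} and \ref{lemma-K0-4}, none of $D_{ij},E_{ij},F_{ij}$ vanishes. Thus Equations \eqref{DEFij} and \eqref{DEFijnew} assert that, at every point of $\{u_1+\dots+u_n=0\}$, the vector $(1,X_n,X'_n)$ lies in the kernel of the $2\times 3$ matrix
\[
\mathcal{M}_{ij}=\begin{pmatrix} D_{ij} & E_{ij} & F_{ij}\\ D_{ij,u_i}-D_{ij,u_j} & E_{ij,u_i}-E_{ij,u_j} & F_{ij,u_i}-F_{ij,u_j}\end{pmatrix}.
\]
If $\mathrm{rank}\,\mathcal{M}_{ij}\le 1$, then, since its first row never vanishes, the second row is a pointwise scalar multiple of it, and this is exactly the asserted equality of the three ratios. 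Since those equalities cut out a closed set, it suffices to show that $\mathrm{rank}\,\mathcal{M}_{ij}=2$ cannot hold on any nonempty open set.

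So suppose $\mathrm{rank}\,\mathcal{M}_{ij}=2$ on a nonempty open subset $U$ of the hyperplane. Eliminating $X'_n$ between \eqref{DEFij} and \eqref{DEFijnew} gives
\[
\alpha_{ij}+\beta_{ij}X_n=0 \qquad\text{on } U,
\]
where $\alpha_{ij}=D_{ij}(F_{ij,u_i}-F_{ij,u_j})-F_{ij}(D_{ij,u_i}-D_{ij,u_j})$ and $\beta_{ij}=E_{ij}(F_{ij,u_i}-F_{ij,u_j})-F_{ij}(E_{ij,u_i}-E_{ij,u_j})$ depend only on $u_1,\dots,u_{n-1}$; moreover $\beta_{ij}\neq 0$ on $U$, for otherwise $\alpha_{ij}$ would vanish there too and, since $F_{ij}\neq 0$, all $2\times 2$ minors of $\mathcal{M}_{ij}$ would vanish, forcing rank $\le 1$. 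Hence $X_n=-\alpha_{ij}/\beta_{ij}$ on $U$: a function of the single variable $u_n$ is forced to coincide, on the hyperplane, with a fixed rational expression in $u_1,\dots,u_{n-1}$. Applying Lemma \ref{pre-lemma} to $\alpha_{ij}+\beta_{ij}X_n$ and differentiating with respect to pairs $u_k,u_l,u_n$ (here $n-1\ge 3$ supplies enough indices $k,l$ outside $\{i,j\}$) produces, after repeatedly substituting $X_n=-\alpha_{ij}/\beta_{ij}$, a chain of polynomial identities among $X_1,\dots,X_{n-1}$ and their first and second derivatives.

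The concluding step is to insert the explicit expressions
\[
D_{ij}=-K_0(X'_i-X'_j)\Big(X_i+X_j+\textstyle\sum_{k=1}^{n-1}X_k\Big),\qquad E_{ij}=-2K_0(X'_i-X'_j)+X''_iX'_j-X'_iX''_j,\qquad F_{ij}=X''_iX_j-X_iX''_j
\]
into those identities and derive a contradiction, using freely the non-vanishing facts already established: $X'_i-X'_j\neq 0$ and $D_{ij}E_{ij}F_{ij}\neq 0$ (Lemmas \ref{lemma-K0-2}--\ref{lemma-K0-4}), $X_iX'_j+X'_iX_j\neq 0$ (Lemma \ref{lemma-K0-3}), $X'_n\neq 0$ (Lemma \ref{lemma-K0-1}), and $X''_n\neq 0$ (as in the proof of Lemma \ref{lemma-K0-2}). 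I expect this last step to be the main obstacle: $\alpha_{ij}$ and $\beta_{ij}$ are of high polynomial degree in the $X_k$, so the elimination is delicate, and one must take care never to divide by a quantity that might vanish — precisely the reason Lemmas \ref{lemma-K0-2}--\ref{lemma-K0-4} are proved beforehand. The contradiction is expected to surface only after Lemma \ref{pre-lemma} has been invoked several more times, exactly in the style of the earlier lemmas of this section.
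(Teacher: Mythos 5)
Your reduction to the rank of the $2\times 3$ matrix $\mathcal{M}_{ij}$ is fine as bookkeeping, but the proposal stops exactly where the proof has to be done: you never actually rule out the rank-$2$ case, you only announce a plan ("insert the explicit expressions \dots and derive a contradiction") and concede it is "the main obstacle." That case is the entire content of the lemma, so as it stands the argument has a genuine gap. Worse, the route you choose makes that gap hard to close: eliminating $X'_n$ between \eqref{DEFij} and \eqref{DEFijnew} only yields $\alpha_{ij}+\beta_{ij}X_n=0$, i.e.\ $X_n=-\alpha_{ij}/\beta_{ij}$ on the hyperplane, and this is \emph{not} by itself contradictory: a function of $u_n$ agreeing on $u_1+\dots+u_n=0$ with a function of $u_1,\dots,u_{n-1}$ merely forces both to be functions of $u_1+\dots+u_{n-1}$, so further invocations of Lemma \ref{pre-lemma} applied to this linear relation alone have no evident reason to terminate in a contradiction. (A secondary point: you also invoke $X''_n\neq 0$ "as in the proof of Lemma \ref{lemma-K0-2}," but that non-vanishing was only derived there under that lemma's contradiction hypothesis; it is not available as a standing fact.)

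The paper's proof supplies precisely the ingredient you discard: it goes back to the \emph{quadratic} relation \eqref{ABC} (with the $K_0X_n^2$ term) and eliminates $X'_n$ between \eqref{DEFij} and \eqref{ABC}, multiplying by $-C_{ij}$ and $F_{ij}$ respectively (legitimate by Lemmas \ref{lemma-K0-3} and \ref{lemma-K0-4}). This produces $P_{ij}+Q_{ij}X_n+R_{ij}X_n^2=0$ with $P_{ij},Q_{ij},R_{ij}$ depending only on $u_1,\dots,u_{n-1}$ and leading coefficient $R_{ij}=K_0F_{ij}\neq 0$; applying Lemma \ref{pre-lemma} gives the companion relation \eqref{lemma-K0-5-3}, and comparing the two quadratics (common roots, hence proportional coefficients when $R_{ij,u_i}-R_{ij,u_j}\neq 0$; a double-root/uniqueness argument leading to a contradiction when $R_{ij,u_i}-R_{ij,u_j}=0$ but $Q_{ij,u_i}-Q_{ij,u_j}\neq 0$) is what delivers the three ratio identities after unwinding $P_{ij},Q_{ij},R_{ij}$ in terms of $D_{ij},E_{ij},F_{ij},C_{ij}$. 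If you want to salvage your approach, you should re-inject \eqref{ABC} in this way rather than work with \eqref{DEFij} and \eqref{DEFijnew} alone; otherwise the contradiction you hope for is not forthcoming.
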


\begin{proof}
Equation \eqref{ABC} is explicitly rewritten as
\begin{equation}
\left.
\begin{array}{l}
(X_iX'_j+X'_iX_j)X'_n=K_0(X_i+X_j)\sum_{k=1}^{n-1}X_k+ \\
\left ( K_0\left (X_i +X_j + \sum_{k=1}^{n-1}X_k \right ) -X'_iX'_j \right )X_n +K_0X_n^2 ,
\end{array} \label{lemma-K0-5-1}
\right.
\end{equation}
for every $i,j \in \{1,...,n-1 \}$, $i < j$. We will eliminate the term $X'_n$ from Equations \eqref{DEFij} and \eqref{lemma-K0-5-1}. We first multiply Equations \eqref{DEFij} and \eqref{lemma-K0-5-1} by $-(X_iX'_j+X'_iX_j)$ and by $F_{ij}$, respectively. We then add the resulting equalities, deducing
\begin{equation}
P_{ij}+Q_{ij}X_n+R_{ij}X_n^2=0, \quad \text{ for every } i,j \in \{1,...,n-1 \},\quad i < j, \label{lemma-K0-5-2}
\end{equation}
where
$$
\left.
\begin{array}{l}
P_{ij}=K_0F_{ij}(X_i+X_j)\sum_{k=1}^{n-1}X_k+ D_{ij}(X_iX'_j+X'_iX_j) \\
Q_{ij}= F_{ij} \left (K_0\left (X_i +X_j + \sum_{k=1}^{n-1}X_k \right ) -X'_iX'_j \right )+E_{ij}(X_iX'_j+X'_iX_j) \\
R_{ij}= K_0F_{ij},
\end{array}
\right.
$$
for every $i,j \in \{1,...,n-1 \}$, $i < j$. By Lemma \ref{pre-lemma}, Equation \eqref{lemma-K0-5-2} yields
\begin{equation}
P_{ij,u_i}-P_{ij,u_j}+(Q_{ij,u_i}-Q_{ij,u_j})X_n+(R_{ij,u_i}-R_{ij,u_j})X_n^2=0,  \label{lemma-K0-5-3}
\end{equation}
for every $i,j \in \{1,...,n-1 \}$, $i < j$. Here the coefficients are
$$
\left.
\begin{array}{l}
P_{ij,u_i}-P_{ij,u_j}=\\
K_0(F_{ij,u_i}-F_{ij,u_j})(X_i+X_j)\sum_{k=1}^{n-1}X_k   +(D_{ij,u_i}-D_{ij,u_j})(X_iX'_j+X_iX'_j),\\
Q_{ij,u_i}-Q_{ij,u_j}= \\
(F_{ij,u_i}-F_{ij,u_j})\left (K_0\left (X_i +X_j + \sum_{k=1}^{n-1}X_k \right ) -X'_iX'_j \right ) +(E_{ij,u_i}-E_{ij,u_j})(X_iX'_j+X'_iX_j), \\
R_{ij,u_i}-R_{ij,u_j}=K_0(F_{ij,u_i}-F_{ij,u_j}).
\end{array}
\right.
$$
Now we have two separated cases according as $R_{ij,u_i}-R_{ij,u_j}$ is or is not $0$.

\textbf{Case 1.} $R_{ij,u_i}-R_{ij,u_j} \neq 0$. 
This assumption is equivalent to $F_{ij,u_i}-F_{ij,u_j} \neq 0$. 
Notice that Equations \eqref{lemma-K0-5-2} and \eqref{lemma-K0-5-3} have the common pair of roots. Hence, we get
\[
\frac{P_{ij,u_i} - P_{ij,u_j}}{P_{ij}}=\frac{Q_{ij,u_i} - Q_{ij,u_j}}{Q_{ij}}=\frac{R_{ij,u_i} - R_{ij,u_j}}{R_{ij}}.
\]
Equivalently, we have
\begin{equation}
\left.
\begin{array}{l}
P_{ij}(R_{ij,u_i} - R_{ij,u_j})=R_{ij}(P_{ij,u_i} - P_{ij,u_j}) \\
Q_{ij}(R_{ij,u_i} - R_{ij,u_j})=R_{ij}(Q_{ij,u_i} - Q_{ij,u_j})
\end{array}
\right. \label{pqrij}
\end{equation}
and a direct calculation in Equation \eqref{pqrij} implies
\[
\left.
\begin{array}{l}
K_0D_{ij}(X_iX'_j+X'_iX_j)(F_{ij,u_i}-F_{ij,u_j})=K_0F_{ij}(X_iX'_j+X'_iX_j)(D_{ij,u_i}-D_{ij,u_j}) \\
K_0E_{ij}(X_iX'_j+X'_iX_j)(F_{ij,u_i}-F_{ij,u_j})=K_0F_{ij}(X_iX'_j+X'_iX_j)(E_{ij,u_i}-E_{ij,u_j}).
\end{array}
\right.
\]
In view of Lemmas \ref{lemma-K0-3} and \ref{lemma-K0-4}, as well as our assumption, none of the terms in the above equation is $0$, proving the result of the lemma.

\textbf{Case 2.} $R_{ij,u_i}-R_{ij,u_j} = 0$. 
 Equation \eqref{lemma-K0-5-3} is now
\[
P_{ij,u_i}-P_{ij,u_j}+(Q_{ij,u_i}-Q_{ij,u_j})X_n=0.
\]
Here $Q_{ij,u_i}-Q_{ij,u_j} = 0$ (or $\neq 0$) if and only if $P_{ij,u_i}-P_{ij,u_j} = 0$ (or $\neq 0$). 
 There is nothing to prove in the case $Q_{ij,u_i}-Q_{ij,u_j} = 0$. 
 Assume now that $Q_{ij,u_i}-Q_{ij,u_j} \neq 0$. 
  Then we have
\[
X_n=-\frac{P_{ij,u_i}-P_{ij,u_j}}{Q_{ij,u_i}-Q_{ij,u_j}}.
\]
Considering in Equation \eqref{lemma-K0-5-2}, we understand that the value of $X_n$ is unique. 
 Hence,
\[
X_n=-\frac{Q_{ij}}{2R_{ij}}
\]
and by Lemma \ref{pre-lemma} we get
\[
\left ( \frac{Q_{ij}}{R_{ij}} \right )_{u_i} =\left ( \frac{Q_{ij}}{R_{ij}} \right )_{u_j}. 
\]
Equivalently, we have
\[
\frac{(Q_{ij,u_i}-Q_{ij,u_j})R_{ij}-Q_{ij}(R_{ij,u_i}-R_{ij,u_j})}{R_{ij}^2}=0, 
\]
yielding $Q_{ij,u_i}-Q_{ij,u_j}=0$. 
This is a contradiction and completes the proof. So, there are two functions $S_{ij}, T_{ij}$ of single variable
\[
S_{ij}(u_1+...+u_{n-1})=\frac{D_{ij}(u_1,...,u_{n-1})}{F_{ij}(u_1,...,u_{n-1})}, \quad
T_{ij}(u_1+...+u_{n-1})=\frac{E_{ij}(u_1,...,u_{n-1})}{F_{ij}(u_1,...,u_{n-1})}.
\]
\end{proof}

\quad After all these preparatory results, we are ready to present the following result.

\begin{theorem} \label{th-2}
Hyperspheres are the only separable hypersurfaces in $\r^n$ $(n>3)$ having nonzero constant sectional curvature.
\end{theorem}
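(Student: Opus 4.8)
The plan is to combine the two structural identities already in hand---Equation \eqref{ABC} and Equation \eqref{DEFij}---together with Lemma \ref{lemma-K0-5}, and to repeatedly invoke Lemma \ref{pre-lemma}, exploiting that $X_n$ is a function of $u_n$ alone while all the coefficient functions depend only on $u_1,\dots,u_{n-1}$. Throughout, recall that on the hypersurface each $X_k=f_k^{'2}>0$ (no $f'_k$ vanishes), that $X'_i-X'_j\neq 0$ for $i,j\in\{1,\dots,n-1\}$ with $i<j$ (Lemma \ref{lemma-K0-2}), and that none of $C_{ij},D_{ij},E_{ij},F_{ij}$ vanishes (Lemmas \ref{lemma-K0-3} and \ref{lemma-K0-4}).

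First I would set $s=u_1+\dots+u_{n-1}$ and use Lemma \ref{lemma-K0-5}: the single-variable functions $S_{ij}(s)=D_{ij}/F_{ij}$ and $T_{ij}(s)=E_{ij}/F_{ij}$ are well defined, and dividing Equation \eqref{DEFij} by $F_{ij}$ gives
\[
X'_n+T_{ij}(s)\,X_n+S_{ij}(s)=0,\qquad i<j,\ \ i,j\in\{1,\dots,n-1\},
\]
on the hyperplane $u_1+\dots+u_n=0$, where $s=-u_n$. Substituting this expression for $X'_n$ into Equation \eqref{ABC} produces a relation of the form
\[
\bigl(A_{ij}-C_{ij}S_{ij}\bigr)+\bigl(B_{ij}-C_{ij}T_{ij}\bigr)X_n-K_0X_n^2=0
\]
on the same hyperplane, whose first two coefficients depend on $u_1,\dots,u_{n-1}$ only, with $S_{ij},T_{ij}$ depending on $u_n$ through $s$.

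Next I would apply Lemma \ref{pre-lemma} to the last display and differentiate along a coordinate direction $u_\ell$ with $\ell\notin\{i,j,n\}$. Since $A_{ij}=-K_0(X_i+X_j)\sum_{k=1}^{n-1}X_k$ and $B_{ij}=-K_0\bigl(X_i+X_j+\sum_{k=1}^{n-1}X_k\bigr)+X'_iX'_j$, while $C_{ij}$ does not involve $u_\ell$, one finds $\partial_{u_\ell}(\,\cdot\,)=-K_0X'_\ell(X_i+X_j+X_n)-C_{ij}\bigl(S'_{ij}+T'_{ij}X_n\bigr)$, and the second summand is the same for every admissible $\ell$. Comparing two such directions $\ell\neq\ell'$ through Lemma \ref{pre-lemma} and cancelling that common term leaves $X'_\ell(X_i+X_j+X_n)=X'_{\ell'}(X_i+X_j+X_n)$; since $K_0\neq0$ and $X_i+X_j+X_n>0$, this forces $X'_\ell=X'_{\ell'}$, contradicting Lemma \ref{lemma-K0-2}. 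Two distinct indices outside $\{i,j,n\}$ exist exactly when $n-3\geq 2$, i.e. $n\geq 5$, so in this range no non-spherical example can occur.

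The remaining case $n=4$ is the crux, since then $\{1,2,3\}\setminus\{i,j\}$ is a single index and the previous comparison is unavailable. Here I would instead equate $\partial_{u_\ell}(\,\cdot\,)$ with $\partial_{u_i}(\,\cdot\,)$ in the quadratic relation above: after substituting $X'_n=-S_{ij}-T_{ij}X_n$ wherever it occurs, the functions $S_{ij},T_{ij}$ appear only through $S_{ij}+T_{ij}X_n=-X'_n$ and through the common term $S'_{ij}+T'_{ij}X_n$ that cancels, leaving a polynomial identity in $X_1,X_2,X_3,X_4$ and their first two derivatives, valid on $u_1+u_2+u_3+u_4=0$. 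Applying Lemma \ref{pre-lemma} once more to this identity---differentiating in $u_4$ and in one of $u_1,u_2,u_3$, and using Lemmas \ref{lemma-K0-2}--\ref{lemma-K0-4} to discard the degenerate branches---should force either $X'_i=X'_j$ for some $i,j\in\{1,2,3\}$, impossible by Lemma \ref{lemma-K0-2}, or $X'_k$ equal to one and the same constant for all $k=1,\dots,4$, i.e. each $f_k$ quadratic with a common leading coefficient, so that $M^{3}$ is a hypersphere---a case already set aside. Either way we reach a contradiction, and hence a separable hypersurface of nonzero constant sectional curvature in $\r^n$ with $n>3$ must be a hypersphere. The main obstacle is precisely this $n=4$ reduction: eliminating $S_{ij},T_{ij}$ and carrying out the repeated applications of Lemma \ref{pre-lemma} require lengthy though mechanical computations with the explicit forms of $A_{ij},B_{ij},C_{ij},D_{ij},E_{ij},F_{ij}$.
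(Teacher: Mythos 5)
There is a genuine gap, and it sits exactly at the step your argument leans on most. Your $n\geq 5$ reduction needs the two terms $C_{ij}\bigl(S'_{ij}+T'_{ij}X_n\bigr)$ coming from the $u_\ell$- and $u_{\ell'}$-derivatives to coincide, i.e.\ it needs $\partial_{u_\ell}(D_{ij}/F_{ij})=\partial_{u_{\ell'}}(D_{ij}/F_{ij})$ for $\ell,\ell'\notin\{i,j\}$. Lemma \ref{lemma-K0-5} does not provide this: its statement \eqref{defratio} only equates the $u_i$- and $u_j$-partials of the ratios and says nothing about directions outside $\{i,j\}$, so $D_{ij}/F_{ij}$ and $E_{ij}/F_{ij}$ are not known to be functions of $u_1+\cdots+u_{n-1}$ alone. (The closing sentence of the paper's proof of that lemma does assert such single-variable functions, but this does not follow from \eqref{defratio}, and the paper's proof of Theorem \ref{th-2} does not use it.) Worse, since $E_{ij}$ and $F_{ij}$ depend only on $(u_i,u_j)$ while $D_{ij}$ contains $\sum_{k=1}^{n-1}X_k$, a direct computation gives, for $\ell,\ell'\notin\{i,j\}$ with $\ell,\ell'\le n-1$,
\[
\partial_{u_\ell}\Bigl(\frac{D_{ij}}{F_{ij}}\Bigr)-\partial_{u_{\ell'}}\Bigl(\frac{D_{ij}}{F_{ij}}\Bigr)
=-\frac{K_0\,(X'_i-X'_j)(X'_\ell-X'_{\ell'})}{F_{ij}},
\]
so the equality you need is \emph{equivalent} to $X'_\ell=X'_{\ell'}$, which is precisely what you intend to contradict via Lemma \ref{lemma-K0-2}: the cancellation is circular. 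If you apply Lemma \ref{pre-lemma} honestly to your substituted quadratic, without the single-variable assumption, the comparison of the $u_\ell$- and $u_{\ell'}$-derivatives yields only the new relation $F_{ij}(X_i+X_j+X_n)=C_{ij}(X'_i-X'_j)$; this can in fact be exploited further by one more application of Lemma \ref{pre-lemma}, but again only when two indices outside $\{i,j\}$ are available, so it does not touch $n=4$.

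The $n=4$ case, which you yourself identify as the crux, is only asserted (``should force either \dots or \dots''), with none of the computations carried out; so even granting the previous step, the proposal is not a proof. The paper proceeds differently and uniformly for all $n>3$: it never uses the $S_{ij},T_{ij}$ of Lemma \ref{lemma-K0-5}, but works with the quadratic \eqref{lemma-K0-5-2}, argues via the symmetric functions of its roots that $P_{ij}/R_{ij}$ and $Q_{ij}/R_{ij}$ depend only on $u_1+\cdots+u_{n-1}$, differentiates (Lemma \ref{pre-lemma}) to obtain \eqref{XST}, eliminates $X'_n$ against \eqref{ABC} to get a cubic in $X_n$, and from the sum of its roots concludes $B_{ij,u_i}-B_{ij,u_j}=E_{ij}=0$, contradicting Lemma \ref{lemma-K0-4} --- with no case distinction on $n$. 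To repair your route you would have to either genuinely prove the single-variable property you invoke (which, by the displayed identity, would by itself already finish $n\geq5$) and then actually settle $n=4$, or switch to an elimination of the paper's type.
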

\begin{proof}
By contradiction, assume that $M^{n-1}$ is not a hypersphere but has constant sectional curvature $K_0/4$, $K_0 \neq 0$. By Lemma \ref{lemma-K0-4}, $R_{ij}= K_0F_{ij}$ is always different from $0$ for every $i,j \in \{1,...,n-1 \}$, $i<j$. Hence, Equation \eqref{lemma-K0-5-2} is
\begin{equation}
\frac{P_{ij}}{R_{ij}}+\frac{Q_{ij}}{R_{ij}}X_n+X_n^2=0, \quad \text{ for every } i,j \in \{1,...,n-1 \},\quad i < j \label{lemma-K0-th-pqr}.
\end{equation}
Denote by $X_{n,1}$ and $X_{n,2}$ the roots of this equation. So,
$$
X_{n,1}+X_{n,2}=-\frac{Q_{ij}}{R_{ij}}, \quad X_{n,1}X_{n,2}=\frac{P_{ij}}{R_{ij}},
$$
where the statements in the right-hand sides depend on the variables $u_1,...,u_{n-1}$, while those in the left-hand sides depend on the variable $u_n$. 
Now, we can set
$$
\tilde{S}_{ij}(u_1+...+u_{n-1})=\frac{P_{ij}(u_1,...,u_{n-1})}{R_{ij}(u_1,...,u_{n-1})}, \quad \tilde{T}_{ij}(u_1+...+u_{n-1})=\frac{Q_{ij}(u_1,...,u_{n-1})}{R_{ij}(u_1,...,u_{n-1})}.
$$
Obviously, we have $\tilde{T}_{ij,u_i}-\tilde{T}_{ij,u_j}=0$,  for every  $i,j \in \{1,...,n-1 \}$, $i < j. $
On the other hand, Equation \eqref{lemma-K0-th-pqr} becomes
$$
\tilde{S}_{ij}+\tilde{T}_{ij}X_n+X_n^2=0, \quad \text{ for every } i,j \in \{1,...,n-1 \},\quad i < j.
$$
By Lemma \ref{pre-lemma}, differentiating with respect to $u_i$ and $u_n$, we get
\begin{equation}
(2X_n+T_{ij})X'_n-T_{ij,u_i}X_n-S_{ij,u_i}=0, \quad \text{ for every } i,j \in \{1,...,n-1 \},\quad i < j. \label{XST}
\end{equation}
We will eliminate the term $X'_n$ from Equations \eqref{ABC} and \eqref{XST}. We first multiply Equations \eqref{ABC} and \eqref{XST} by $-(2X_n+\tilde{T}_{ij})$ and by $C_{ij}$, respectively. We then add the resulting equalities, obtaining a polynomial equation on $X_n$ of degree $3$
$$
-\frac{A\tilde{T}_{ij}+C_{ij}\tilde{S}_{ij,u_i}}{2K_0}-\frac{2A_{ij}+B_{ij}\tilde{T}_{ij}+C_{ij}\tilde{T}_{ij,u_i}}{2K_0}X_n + \frac{K_0\tilde{T}_{ij}-2B_{ij}}{2K_0}X^2_n +X^3_n=0,
$$
for every $i,j \in \{1,...,n-1 \}$, $i < j$. Here we denote the roots $X_{n,1}$, $X_{n,2}$ and $X_{n,3}$. Hence, the sum of the roots are the negative of the coefficient of $X^2_n$, namely
$$
X_{n,1}+X_{n,2}+X_{n,3}=\frac{B_{ij}}{K_0}-\frac{\tilde{T}_{ij}}{2}, \quad \text{ for every } i,j \in \{1,...,n-1 \},\quad i < j,
$$
where the statement in the right-hand side depend on the variables $u_1,...,u_{n-1}$, while the others depend on the variable $u_n$. Using Lemma \ref{pre-lemma}, where we differentiate with respect to $u_i$ and $u_j$, we obtain
$$
\frac{1}{K_0}(B_{ij,u_i}-B_{ij,u_j})-\frac{1}{2}(\tilde{T}_{ij,u_i}-\tilde{T}_{ij,u_j})=0, \quad \text{ for every } i,j \in \{1,...,n-1 \},\quad i < j,
$$
or equivalently,
$$
\frac{1}{K_0}(B_{ij,u_i}-B_{ij,u_j})=0, \quad \text{ for every } i,j \in \{1,...,n-1 \},\quad i < j.
$$
Since $B_{ij,u_i}-B_{ij,u_j}=E_{ij}$, by Lemma \ref{lemma-K0-4} we arrive to a contradiction.

\end{proof}

\subsection*{Acknowledgment}
Rafael López is partially supported by MINECO/MICINN/FEDER grant no. PID2020-117868GB-I00,  and by the ``Mar\'{\i}a de Maeztu'' Excellence Unit IMAG, reference CEX2020-001105- M, funded by MCINN/AEI/ 10.13039/501100011033/ CEX2020-001105-M. Gabriel-Eduard V\^{\i}lcu was supported by a grant of the Ministry of Research, Innovation and Digitization, CNCS/CCCDI - UEFISCDI, project number PN-III-P4-ID-PCE-2020-0025, within PNCDI III.


 \end{document}